\theoremstyle{definition}
\newtheorem{thm}{Theorem}[section]
\newtheorem{obs}[thm]{Observation}
\newtheorem{prop}[thm]{Proposition}
\newtheorem{cor}[thm]{Corollary}
\newtheorem{example}[thm]{Example}
\newtheorem{conj}[thm]{Conjecture}
\newtheorem{notation}[thm]{Notation}
\newtheorem{lem}[thm]{Lemma}
\newtheorem{problem}[thm]{Problem}
\DeclareMathOperator{\img}{im}
\DeclareMathOperator{\NVol}{NVol}
\DeclareMathOperator{\vol}{vol}
\DeclareMathOperator{\cat}{cat}
\DeclareMathOperator{\conv}{conv}
\DeclareMathOperator{\Ehr}{Ehr}
\DeclareMathOperator{\PM}{PM}
\DeclareMathOperator{\PQ}{PQ}
\newcommand{\RR}{\mathbb{R}}
\newcommand{\ZZ}{\mathbb{Z}}
\newcommand{\APQ}{\nabla^{\PQ}}
\newcommand{\calM}{\mathcal{M}}
\newcommand{\calS}{\mathcal{S}}
\newcommand{\h}{\overline{h}}
\newcommand\commentout[1]{}
\begin{document}

% title/author/abstract information
\title[Perfectly Matchable Set Polynomials and $h^*$-polynomials]{Perfectly Matchable Set Polynomials and $h^*$-polynomials for Stable Set Polytopes of Complements of Graphs}

\author{Robert Davis}
\address{Department of Mathematics\\
         Colgate University\\
         Hamilton, NY USA}
\email{rdavis@colgate.edu}

\author{Florian Kohl}
\address{Department of Mathematics and Systems Analysis\\
         Aalto University\\
         Espoo, Finland}
\email{florian.kohl@aalto.fi}

\thanks{The first author was supported in part by NSF grant DMS-1922998, and the second author was supported by Academy of Finland project 324921.}

\begin{abstract}
    A subset $S$ of vertices of a graph $G$ is called a perfectly matchable set of $G$ if the subgraph induced by $S$ contains a perfect matching.
    The perfectly matchable set polynomial of $G$, first made explicit by Ohsugi and Tsuchiya, is the (ordinary) generating function $p(G; z)$ for the number of perfectly matchable sets of $G$.
    
    In this work, we provide explicit recurrences for computing $p(G; z)$ for an arbitrary (simple) graph and use these to compute the Ehrhart $h^*$-polynomials for certain lattice polytopes.
    Namely, we show that $p(G; z)$ is the $h^*$-polynomial for certain classes of stable set polytopes, whose vertices correspond to stable sets of $G$.
\end{abstract}

\maketitle

\tableofcontents

%%%%%%%%%%%%%%%%%
%%%%%%%%%%%%%%%%%
%%%%%%%%%%%%%%%%%

\section{Introduction}

For a graph $G$, we let $V(G)$ denote the \emph{vertices} of $G$ and let $E(G)$ denote the $edges$ of $G$.
This article considers two combinatorial structures associated to graphs, which we always assume to be finite and simple.
The first structure relies on perfectly matchable sets: a \emph{perfectly matchable set} in $G$ is a subset $S \subseteq V(G)$ such that $G[S]$, the subgraph induced by $S$, contains at least one perfect matching.
We use the convention that the empty set is a perfectly matchable set.
The \emph{perfectly matchable set polynomial} of $G$ is
\[
    p(G; z) = \sum_{k=0}^{\left\lfloor\frac{|V(G)|}{2}\right\rfloor} p_{2k}z^k
\]
where $p_{2k}$ is the number of perfectly matchable sets in $G$ of size $2k$. 
For example, for the $4$-cycle $C_4$ we have $p(C_4; z) = 1+4z+z^2$.
Note that even though $C_4$ admits two distinct perfect matchings, they each use the same set of vertices, hence the summand $z^2$ instead of $2z^2$.

For the second structure, recall that a \emph{lattice polytope} $P \subseteq \RR^n$ is the convex hull of a finite set of points $v_1,\dots,v_k \in \ZZ^n$ and is often written 
\[
    P = \conv\{v_1,\dots,v_k\} = \left\{ \sum_{i=1}^k \lambda_i v_i \mid \lambda_i \geq 0,\, \sum_{i=1}^k \lambda_i = 1\right \}.
\]
Ehrhart \cite{Ehrhart} showed that the function $t \mapsto |tP \cap \ZZ^n|$ agrees with a polynomial $L_P(t)$ of degree $\dim(P)$, called the \emph{Ehrhart polynomial} of $P$.
In turn, the generating function
\[
    \Ehr_P(z) = 1 + \sum_{m \geq 1} L_P(m)z^m,
\]
called the \emph{Ehrhart series} of $P$, has rational form
\[
    \Ehr_P(z) = \frac{h^*(P; z)}{(1-z)^{\dim(P)+1}}
\]
where $h^*(P; z)$ is a polynomial of degree $d \leq \dim(P)$, called the \emph{$h^*$-polynomial} of $P$.
The sequence $h^*(P) = (h_0^*,\dots,h_d^*)$ is the \emph{$h^*$-vector} of $P$.

The $h^*$-polynomials of polytopes have been the subject of intense scrutiny in recent years, since they encode various data of the corresponding polytopes.
For example, the \emph{normalized volume} of $P$, $\NVol(P) = (\dim P)!\vol(P)$ where $\vol$ is the relative Euclidean volume of $P$, can be computed indirectly as $\NVol(P) = h^*(P;1)$.
Additionally, the linear coefficient of $h^*(P; z)$ can be shown to be, through elementary methods, $|P \cap \ZZ^n| - \dim(P)-1$. 
See \cite{BraunUnimodality} for a recent survey of work that has been done in understanding the coefficients of $h^*(P; z)$. 

We will focus on $h^*(P; z)$ for a special class of polytopes.
Recall that a \emph{stable set} of a graph $G$ is a subset $S \subseteq V(G)$ such that no two elements of $S$ are adjacent. 
Stable sets are sometimes called \emph{independent sets}, but we prefer ``stable sets'' in order to avoid potential conflict with the notion of independence in matroid theory. 
Let $\calS_G$ be the collection of stable sets of $G$.
The \emph{stable set polytope} $P_G$ is defined as
\[
	P_G = \conv\{\chi_G(S) \in \RR^{|V(G)|} \mid S \in \calS_G\}
\]
where $\chi_G = \chi_{V(G)}$ is the indicator vector.

In this article, we examine $h^*(P_G; z)$ for large classes of graphs $G$ using perfectly matchable set polynomials.
Section~\ref{sec: background} provides context for where this work originated and how it can provide insight for ongoing work regarding a certain subpolytope of $P_G$ that arises in the study of power-flow equations \cite{ChenMehta, DavisChenDraconian, DavisJakovleskiPan}.
In Section~\ref{sec: recurrences} we give several recurrences for $p$ (Proposition~\ref{prop: p(G; z) with no even cycles}, Theorem~\ref{thm: perf match sets recurrence}, Theorem~\ref{thm: cut vertex}). 
With these results and their consequences, we show that, for any $G$, $p(G; z)$ can be computed by a applying a sequence of recurrences. 
In Section~\ref{sec: relation to Ehrhart} we illustrate that $p(G; z)$ is often an $h^*$-polynomial of a related stable set polytope, depending on properties of the graph.
We close with Section~\ref{sec: special cases}, in which we more closely examine $p(G; z)$, in terms of $h^*$-polynomials, for several classes of trees.
Even in this highly restrictive case, we obtain interesting $h^*$-polynomials. 

\section{Background}\label{sec: background}

This work is motivated in part by the study of type-PQ adjacency polytopes, introduced by Chen and the first author \cite{DavisChenDraconian} and extended in \cite{DavisJakovleskiPan}, which arise in the study of power-flow equations for electrical networks.
Namely, if $G$ is a graph on the set $[n] = \{1,\dots,n\}$, then the \emph{type-PQ adjacency polytope} is
\[
    \APQ_G = \conv\{(e_i, e_j) \in \RR^{2n} \mid ij \in E(G) \text{ or } i=j\}.
\]
For a graph $G$ on $[n]$, let $[\overline n] = \{\overline 1, \dots, \overline n\}$ and let $D(G)$ be the graph on $[n] \cup [\overline n]$ where all edges are of the form $i\overline j$, and $i \overline j$ is an edge if and only if $i = j$ or if $ij = ji$ is an edge in $G$. 
See Figure~\ref{fig: D(G)} for an example.
One may also consider $D(G)$ as the bipartite double cover of the graph obtained from $G$ by adding exactly one loop to each vertex.
On the polytope side, one may consider $\APQ_G$ as the edge polytope of $D(G)$ or as a subpolytope of the $(|V(G)|,2)$-hypersimplex, i.e., the convex hull of all indicator vectors $\chi_{V(G)}(S)$ with $S \in \binom{V(G)}{2}$.
%For each of these larger classes of polytopes, questions about their $h^*$-vectors remain wide open.
%See \cite{include} for partial progress for these classes.

\begin{figure}
\begin{center}
\begin{tikzpicture}
\begin{scope}[every node/.style={circle,fill,inner sep=0pt,minimum size=2mm}]
	\node[label=$1$] (A) at (-1,0) {};
	\node[label=$2$] (B) at (0.8,0) {};
	\node[label=right:$3$] (C) at (2,1) {};
	\node[label=right:$4$] (D) at (2,-1) {};
\end{scope}
\node (E) at (0,-1.5) {};

\draw[thick] (A) -- (B) -- (C) -- (D) -- (B);
\end{tikzpicture}
\hspace{1in}
\begin{tikzpicture}

\begin{scope}[every node/.style={circle,fill,inner sep=0pt,minimum size=2mm}]
	\node[label=left:$1$] (A) at (0,0) {};
	\node[label=left:$2$] (B) at (0,1) {};
	\node[label=left:$3$] (C) at (0,2) {};
	\node[label=left:$4$] (D) at (0,3) {};
	\node[label=right:$\overline{1}$] (AA) at (1,0) {};
	\node[label=right:$\overline{2}$] (BB) at (1,1) {};
	\node[label=right:$\overline{3}$] (CC) at (1,2) {};
	\node[label=right:$\overline{4}$] (DD) at (1,3) {};
\end{scope}

\draw[thick] (A) -- (AA) -- (B) -- (BB) -- (A);
\draw[thick] (B) -- (CC) -- (C) -- (BB) -- (D) -- (CC);
\draw[thick] (D) -- (DD) -- (C);
\draw[thick] (B) -- (DD);
\end{tikzpicture}
\end{center}
\caption{A graph $G$, left, and its corresponding bipartite graph $D(G)$, right.}\label{fig: D(G)}
\end{figure}
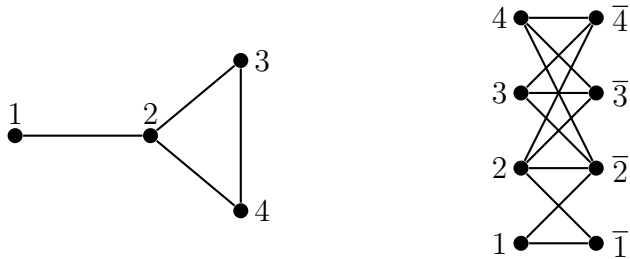

Thus, there is no reason to expect simple, exact, algebraic formulas for $\NVol(\APQ(G))$ in general.
However, considering $\APQ_G$ as a subset of another polytope does allow one to obtain bounds on not only the normalized volume but also on the coefficients of the $h^*$-polynomial.
This relies in part on the fact that $h^*(P,1) = \NVol(P)$ for every lattice polytope $P$; see \cite[Corollary 3.21]{BeckRobinsCCDed2}.

\begin{thm}[{\cite[Theorem 3.3]{StanleyMonotonicity}}]\label{thm: monotonicity}
    Let $P, Q \subseteq \RR^n$ be lattice polytopes such that $P$ is a subpolytope of $Q$.
    Let $h^*(P) = (h_0^*(P),h_1^*(P), \dots, h_d^*(P))$ be the $h^*$-vector of $P$ and let $h^*(Q) = (h_0^*(Q),h_1^*(Q), \dots, h_{d'}^*(Q))$ be the $h^*$-vector of $Q$.
    Then $d \leq d'$ and $h_i^*(P) \leq h_i^*(Q)$ for all $i = 1,\dots, d$.
\end{thm}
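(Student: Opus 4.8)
This is Stanley's monotonicity theorem, and I would prove it by lifting the polytopes to cones, invoking the Cohen--Macaulayness of the associated Ehrhart rings, and reading the $h^*$-polynomials off of Artinian reductions. Concretely, place $P$ and $Q$ at height $1$ in $\RR^{n+1}$ and form the cones $C(P) \subseteq C(Q)$. Let $A_P = \bigoplus_{m \ge 0}\bigoplus_{a \in mP \cap \ZZ^n} k\,x^a t^m$ be the Ehrhart ring, graded by the exponent $m$ of $t$, and define $A_Q$ analogously; the containment $P \subseteq Q$ gives a graded inclusion $A_P \hookrightarrow A_Q$, and the Hilbert series of $A_P$ and $A_Q$ are $\Ehr_P(z)$ and $\Ehr_Q(z)$. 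Each of these rings is the affine semigroup ring of a saturated (normal) cone, so by Hochster's theorem both are Cohen--Macaulay.

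The second ingredient is that, over an infinite field $k$, a lattice polytope $R$ of dimension $e$ yields a degree-one homogeneous system of parameters $\theta_0, \dots, \theta_e$ for $A_R$: the lattice points of $R$ (which include all of its vertices) generate the cone $C(R)$, so $e+1$ generic $k$-linear combinations of the height-one generators form a system of parameters, cutting $\operatorname{Proj} A_R$ down to the empty set. Because $A_R$ is Cohen--Macaulay, such a system is a regular sequence, and each regular degree-one element multiplies the Hilbert series by $(1-z)$; hence $A_R/(\theta_0, \dots, \theta_e)$ is Artinian with Hilbert series $(1-z)^{e+1}\Ehr_R(z) = h^*(R; z)$. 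In particular $h^*(R;z)$ has nonnegative coefficients, recovering Stanley's nonnegativity and giving the interpretation of $h^*$ that I will compare across $P$ and $Q$.

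I would next dispose of the dimension gap using the invariance of $h^*$ under lattice pyramids: if $R^*$ is the pyramid $\conv(R \cup \{v\})$ over $R$ with a lattice apex $v \notin \aff(R)$, then slicing $mR^*$ by height gives $\Ehr_{R^*}(z) = \Ehr_R(z)/(1-z)$, so $h^*(R^*; z) = h^*(R; z)$. When $\dim P = d < d' = \dim Q$, choose a lattice vertex $v$ of $Q$ outside $\aff(P)$, replace $P$ by $\conv(P \cup \{v\}) \subseteq Q$, and repeat until reaching a lattice polytope $\widetilde P$ with $P \subseteq \widetilde P \subseteq Q$, $\dim \widetilde P = d'$, and $h^*(\widetilde P; z) = h^*(P; z)$. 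This reduces the theorem to the equidimensional case $\dim P = \dim Q$.

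The remaining, and genuinely hard, step is the equidimensional comparison, where $A_P \subseteq A_Q$ are both Cohen--Macaulay of Krull dimension $d'+1$. Writing over the common denominator gives $\Ehr_Q(z)-\Ehr_P(z) = \bigl(h^*(Q;z)-h^*(P;z)\bigr)/(1-z)^{d'+1}$, and since $L_Q(m)\ge L_P(m)$ for all $m$ the left-hand side is a power series with nonnegative coefficients. The crux is that this by itself does \emph{not} force the numerator $h^*(Q;z)-h^*(P;z)$ to have nonnegative coefficients; upgrading nonnegativity of the Hilbert function to nonnegativity of the $h^*$-difference is exactly where the Cohen--Macaulay module structure must be used. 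The main obstacle to a one-line argument is that a degree-one regular sequence for $A_Q$ cannot in general be taken inside the subring $A_P$: a vertex of $Q$ lying outside $P$ forces every height-one element of $A_P$ to vanish at the corresponding torus-fixed point of $\operatorname{Proj} A_Q$, creating a base point, so the two Artinian reductions do not share a parameter system. My plan is to fix a degree-one regular sequence $\Theta$ for $A_Q$ and study the induced graded map $A_P \to A_Q/\Theta A_Q$, whose image has Hilbert function bounded above by $h^*(Q;z)$; the heart of the proof is to control the kernel of this map, using Cohen--Macaulayness of $A_Q$ (for instance through a depth/regular-sequence analysis of the inclusion, or the canonical module), so as to conclude that the image accounts for at least $h^*(P;z)$ in each degree. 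Granting this, we obtain $h_i^*(P) \le h_i^*(Q)$ for every $i$; and since each $h^*$ has nonnegative coefficients and vanishes above its degree, the termwise inequality also forces $d \le d'$.
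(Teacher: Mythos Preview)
The paper does not prove this theorem at all: it is quoted verbatim from Stanley's paper and used as a black box, so there is no ``paper's own proof'' to compare against.

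As for your proposal itself, the scaffolding is correct and standard: passing to the cones $C(P)\subseteq C(Q)$, invoking Hochster's theorem for Cohen--Macaulayness of the Ehrhart rings, interpreting $h^*$ as the Hilbert function of an Artinian reduction, and reducing to $\dim P=\dim Q$ via iterated lattice pyramids are exactly the preliminary moves in Stanley's argument. You are also right that a degree-one system of parameters for $A_Q$ need not lie in $A_P$ (and, symmetrically, one for $A_P$ need not be a system of parameters for $A_Q$), so one cannot simply compare the two Artinian reductions through a shared regular sequence.

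The problem is that your write-up stops precisely at this point. You say your ``plan'' is to study the map $A_P\to A_Q/\Theta A_Q$ and ``control the kernel\ldots so as to conclude that the image accounts for at least $h^*(P;z)$ in each degree,'' and then continue with ``Granting this.'' But that sentence is the theorem: everything before it is the (well-known) nonnegativity setup, and everything after it is bookkeeping. You have correctly located the difficulty and then assumed it away. As written, this is an outline that reaches the crux and then asserts the conclusion; it is not a proof. If you want to complete it along Stanley's lines, the missing ingredient is to realize both $A_P$ and $A_Q$ as finitely generated Cohen--Macaulay modules over a \emph{single} polynomial ring $k[\Theta]$ (so that freeness over $k[\Theta]$ turns the graded inclusion $A_P\hookrightarrow A_Q$ into an inclusion of graded free modules, from which the coefficientwise inequality of numerators follows); making that common module structure precise is exactly the step you have not supplied.
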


Suppose $P \subseteq \RR^n$ is a polytope.
A hyperplane $H$ given by the linear equation $\ell(x) = b$ defines a \emph{face} if either $\ell(x) \leq b$ for all $x \in P$ or $\ell(x) \geq b$ for all $x \in P$.
A \emph{facet} of $P$ is a face of dimension $\dim(P)-1$.
Each face of $P$ is an intersection of some set of facets of $P$.
Lastly, recall that a \emph{clique} in a graph is a set $S \subseteq V(G)$ for which $G[S]$ is complete.

\begin{thm}[{\cite{Padberg}}]
    Let $G$ be a graph.
    If $S$ is a maximal clique in $G$, then the equation $\langle \chi_S, x\rangle = 1$ describes a facet-defining hyperplane of $P_G$.
\end{thm}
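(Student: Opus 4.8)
The plan is to verify that the hyperplane $\langle \chi_S, x\rangle = 1$ is \emph{valid} for $P_G$ (so that it defines a face in the sense above), and then to show that the face it cuts out has dimension $\dim(P_G)-1$. Since the vertices of $P_G$ are exactly the vectors $\chi_G(T)$ for stable sets $T$, and $\langle \chi_S, \cdot\rangle$ is linear, it is enough to evaluate the functional on these vertices. For a stable set $T$ one has $\langle \chi_S, \chi_G(T)\rangle = |S \cap T|$, and because $S$ is a clique while $T$ is stable, $T$ can contain at most one vertex of $S$; hence $|S \cap T| \leq 1$. So $\langle \chi_S, x\rangle \leq 1$ holds on all of $P_G$, with equality attained (for instance at any singleton vertex $\chi_G(\{s\})$, $s \in S$), and the hyperplane defines a face $F = \{x \in P_G \mid \langle \chi_S, x\rangle = 1\}$. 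Since $\chi_G(\emptyset) = 0 \in P_G$ gives $\langle \chi_S, 0\rangle = 0 < 1$, this face is proper.

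First I would record that $P_G$ is full-dimensional. The origin $\chi_G(\emptyset)$ together with the basis vectors $\chi_G(\{v\}) = e_v$, each singleton being a stable set, give $n+1$ affinely independent points, so $\dim(P_G) = n$ where $n = |V(G)|$. Consequently, proving $F$ is a facet reduces to producing $n$ affinely independent vertices of $P_G$ lying on $F$, that is, $n$ affinely independent stable sets meeting $S$ in exactly one vertex.

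The core of the argument is the explicit construction of these $n$ stable sets. Writing $S = \{s_1,\dots,s_k\}$, the $k$ singletons $\{s_1\},\dots,\{s_k\}$ already lie on $F$. For each remaining vertex $w \in V(G) \setminus S$, the \emph{maximality} of the clique $S$ guarantees a vertex $s(w) \in S$ not adjacent to $w$ (otherwise $S \cup \{w\}$ would be a strictly larger clique); then $\{s(w), w\}$ is a stable set meeting $S$ exactly in $s(w)$, so it contributes the point $e_{s(w)} + e_w$ to $F$. This gives $k + (n-k) = n$ vertices on $F$. To check affine independence I would subtract $e_{s_1}$ from all of them and argue that the $n-1$ difference vectors are linearly independent: for each $w$, the vector $e_{s(w)} + e_w - e_{s_1}$ is the only one with a nonzero entry in coordinate $w$, which forces all of the associated coefficients to vanish, leaving the manifestly independent differences $e_{s_i} - e_{s_1}$ for $i = 2,\dots,k$. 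Thus $\dim F \geq n-1$, and since $F$ is a proper face of the $n$-dimensional polytope $P_G$ we conclude $\dim F = n-1$, so $F$ is a facet.

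The main obstacle, and the sole place where the full hypothesis is used, is the step invoking maximality to locate a non-neighbor $s(w)$ for every outside vertex $w$. This is exactly what is needed and cannot be dropped: if some $w \notin S$ were adjacent to all of $S$, then every vertex on $F$ would have $x_w = 0$, trapping $F$ in the additional hyperplane $x_w = 0$ and lowering its dimension below $n-1$. Everything else in the argument is routine bookkeeping with indicator vectors.
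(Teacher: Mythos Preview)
Your argument is correct and is essentially the standard proof of Padberg's clique-facet theorem: check validity via $|S\cap T|\le 1$, exhibit full-dimensionality of $P_G$ using $0$ and the unit vectors, and then use maximality of $S$ to find, for each $w\notin S$, a non-neighbor $s(w)\in S$ so that the stable sets $\{s_i\}$ and $\{s(w),w\}$ give $n$ affinely independent points on the face. The linear-independence bookkeeping is clean and accurate, and your remark that maximality is exactly what prevents $F$ from being trapped in some coordinate hyperplane $x_w=0$ is the right way to see the hypothesis is necessary.

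As for comparison with the paper: there is nothing to compare. The paper merely \emph{quotes} this result, attributing it to Padberg, and gives no proof of its own. So your write-up supplies a self-contained justification where the paper simply cites the literature.
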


For our connection, recall that for a graph $G$, the \emph{complement} of $G$ is denoted $\overline{G}$.
When $G$ is bipartite, then the maximal cliques of $G$ are exactly the edges; equivalently, the maximal stable sets of $\overline{G}$ are the edges of $G$. 
This brings us to the observation which takes our focus from type-PQ adjacency polytopes to stable set polytopes.

\begin{obs}\label{obs: PQ face of stable}
	For any graph $G$, $\APQ_G$ is a face of $P_{\overline{D(G)}}$.
\end{obs}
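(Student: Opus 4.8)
The plan is to exhibit an explicit supporting hyperplane of $P_{\overline{D(G)}}$ whose induced face is precisely $\APQ_G$. First I would unwind the structure of $\overline{D(G)}$. Since $D(G)$ is bipartite with parts $[n]$ and $[\overline{n}]$ and contains no edges inside either part, its complement $\overline{D(G)}$ contains every edge within $[n]$ and every edge within $[\overline{n}]$, together with the cross edges $i\overline{j}$ for which $i \neq j$ and $ij \notin E(G)$.

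Next I would describe the stable sets of $\overline{D(G)}$. Because $[n]$ and $[\overline{n}]$ are each cliques of $\overline{D(G)}$, any stable set $S$ meets each part in at most one vertex, so $|S| \leq 2$. The two-element stable sets are exactly the pairs $\{i, \overline{j}\}$ with $i\overline{j} \notin E(\overline{D(G)})$, i.e. with $i = j$ or $ij \in E(G)$. Under the identification $\RR^{[n]\cup[\overline{n}]} \cong \RR^n \times \RR^n$, the indicator vector of $\{i, \overline{j}\}$ is exactly $(e_i, e_j)$, so these vertices of $P_{\overline{D(G)}}$ are precisely the generators of $\APQ_G$.

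Then I would take the linear functional $\ell(x) = \sum_{v \in [n] \cup [\overline{n}]} x_v$ summing all coordinates. For a vertex $\chi(S)$ of $P_{\overline{D(G)}}$ we have $\ell(\chi(S)) = |S| \leq 2$, so $\ell(x) \leq 2$ on all of $P_{\overline{D(G)}}$ and $H = \{\ell(x) = 2\}$ is a supporting hyperplane. The face $P_{\overline{D(G)}} \cap H$ is the convex hull of the vertices at which $\ell$ attains its maximum $2$, and by the previous step these are exactly the points $(e_i, e_j)$ with $i = j$ or $ij \in E(G)$. Hence $P_{\overline{D(G)}} \cap H = \APQ_G$, proving the claim.

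There is essentially no serious obstacle here; the only points requiring care are the bookkeeping of the coordinate identification $\RR^{2n} \cong \RR^{[n] \cup [\overline{n}]}$, the fact that the vertices of a stable set polytope are precisely the indicator vectors of stable sets, and the standard observation that the face cut out by a supporting hyperplane equals the convex hull of the vertices lying on that hyperplane, so that no lower-dimensional subtleties arise.
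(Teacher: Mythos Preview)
Your argument is correct and complete. The paper states this result as an observation without supplying a proof, so there is no detailed argument to compare against; your proof fills in exactly the details the authors left implicit, and the supporting-hyperplane approach via the coordinate-sum functional is the natural one.
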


Putting Theorem~\ref{thm: monotonicity} together with Observation~\ref{obs: PQ face of stable}, we get the following.

\begin{lem}
    For any graph $G$, $h_i^*(\APQ_G) \leq h_i^*(P_{\overline{D(G)}})$
    for all $i$, and therefore $\NVol(\APQ_G) \leq \NVol(P_{\overline{D(G)}})$.
\end{lem}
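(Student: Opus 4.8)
The plan is to combine Stanley's monotonicity result (Theorem~\ref{thm: monotonicity}) with the containment recorded in Observation~\ref{obs: PQ face of stable}. First I would verify the hypotheses of Theorem~\ref{thm: monotonicity}. Since $D(G)$ has vertex set $[n] \cup [\overline n]$, its complement $\overline{D(G)}$ also has $2n$ vertices, so $P_{\overline{D(G)}} \subseteq \RR^{2n}$, which is exactly the ambient space of $\APQ_G$. Moreover, a face of a lattice polytope is itself a lattice polytope and is contained in the original; hence Observation~\ref{obs: PQ face of stable} exhibits $\APQ_G$ as a lattice subpolytope of $P_{\overline{D(G)}}$. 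With both polytopes in the common ambient space $\RR^{2n}$ and one a subpolytope of the other, the hypotheses of Theorem~\ref{thm: monotonicity} are met.

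Applying Theorem~\ref{thm: monotonicity} with $P = \APQ_G$ and $Q = P_{\overline{D(G)}}$ immediately yields $h_i^*(\APQ_G) \leq h_i^*(P_{\overline{D(G)}})$ for every index $i$ in the range $1 \leq i \leq \deg h^*(\APQ_G; z)$ guaranteed by that theorem. To upgrade this to all $i \geq 0$, I would note that $h_0^* = 1$ for any nonempty lattice polytope, so the $i = 0$ case holds with equality, while for indices exceeding $\deg h^*(\APQ_G; z)$ the left-hand side vanishes and the inequality is trivial.

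Finally, the normalized-volume inequality follows by summing the termwise bounds. Using the identity $\NVol(P) = h^*(P; 1) = \sum_{i \geq 0} h_i^*(P)$ (see~\cite[Corollary 3.21]{BeckRobinsCCDed2}), we obtain
\[
    \NVol(\APQ_G) = \sum_{i \geq 0} h_i^*(\APQ_G) \leq \sum_{i \geq 0} h_i^*(P_{\overline{D(G)}}) = \NVol(P_{\overline{D(G)}}).
\]
There is no genuine obstacle in this argument: the only points requiring any care are confirming that the two polytopes share the ambient space $\RR^{2n}$ and that a face of a polytope qualifies as a subpolytope, both of which are immediate from the definitions. The substance of the result resides entirely in Theorem~\ref{thm: monotonicity} and Observation~\ref{obs: PQ face of stable}, and this lemma simply assembles them.
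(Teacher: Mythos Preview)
Your proposal is correct and follows exactly the route the paper indicates: the lemma is stated immediately after the sentence ``Putting Theorem~\ref{thm: monotonicity} together with Observation~\ref{obs: PQ face of stable}, we get the following,'' and your argument simply spells out that combination, together with the identity $\NVol(P)=h^*(P;1)$ already cited in the paper. Your additional care with the ambient space, the $i=0$ case, and the indices beyond $\deg h^*(\APQ_G;z)$ is more than the paper provides but entirely in the same spirit.
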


For often-loose but reasonable lower- and upper- bounds on $\NVol(\APQ_G)$ connected graphs, we can simply use 
\[
    \APQ_T \subseteq \APQ_G \subseteq \APQ_{K_n}
\]
where $T$ is a spanning tree of $G$ and $n = |V(G)|$.
Corollary 14 of \cite{DavisChenDraconian} gives $\NVol(\APQ_T) = 2^{n-1}$, and Theorem 2.3 of \cite{GGP} gives $\NVol(\APQ_{K_n}) = \binom{2(n-1)}{n-1}$.
These bounds can be refined in terms of $h^*$-numbers:
\begin{equation}\label{eq: h-star bounds}
    \binom{n-1}{i} \leq h^*_i \leq \binom{n-1}{i}^2
\end{equation}
where $h^*_0, \dots, h^*_n$ is the $h^*$-vector for $\APQ_G$.
The lower bound is due to upcoming work of the first author \cite{DavisInPrep} and the upper bound is Theorem 13 of \cite{ArdilaGrowthLattices}.

A first step towards understanding more about the $h^*$-vector for $\APQ_G$ would be to utilize the connection to stable set polytopes of perfect graphs. 
While the the hyperplane description of stable set polytopes is in general not known, they are known in the important special case of perfect graphs. 
A graph $G$ is called \emph{perfect} if, for every induced subgraph $G'$, the chromatic number of $G'$ is the maximum size of a clique of $G'$.
The following result, due to Lov\'asz~\cite{Lovasz} gives not only the full hyperplane description, but it even characterizes perfect graphs in terms of stable set polytopes.

\begin{thm}[{\cite{Lovasz}}]\label{thm:Lovasz}
    A graph $G$ on $[n]$ is perfect if and only if 
	\[
        P_G \ = \ \{ x \in \RR^n_{\geq 0}   \mid \ \langle \chi_C,x\rangle \leq 1
        \text{ for all maximal cliques }  C\subseteq [n] \}.
	\]
\end{thm}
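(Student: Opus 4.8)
The plan is to prove the two inclusions separately, with the easy one holding for every graph and the perfection hypothesis entering only to force integrality. Write $Q_G$ for the polytope on the right-hand side. First I would record the inclusion $P_G \subseteq Q_G$, valid for all $G$: if $S$ is a stable set and $C$ a maximal clique, then $C$ meets $S$ in at most one vertex, so $\langle \chi_C, \chi_S\rangle \le 1$, while $\chi_S \ge 0$ trivially; hence every vertex $\chi_S$ of $P_G$ lies in $Q_G$, and convexity gives the inclusion (this is also the content of Padberg's theorem on the facet side). Since each vertex extends to a maximal clique, $Q_G \subseteq [0,1]^n$, and both $P_G$ and $Q_G$ are down-monotone in $\RR^n_{\ge 0}$; consequently they coincide as soon as their support functions agree at every $w \in \RR^n_{\ge 0}$. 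For $w \in \ZZ^n_{\ge 0}$ the support function of $P_G$ is the maximum weight $\alpha_w(G)$ of a stable set, while linear programming duality identifies the support function of $Q_G$ with the minimum fractional weighted clique cover $\overline{\chi}^*_w(G)$. Translating through complements via $\alpha_w(G) = \omega_w(\overline G)$ and (weighted clique cover of $G$) $=\chi_w(\overline G)$, the equality $P_G = Q_G$ reduces to the weighted min--max relation $\omega_w(\overline G) = \chi_w(\overline G)$ for all $w$, where $\omega_w$, $\chi_w$ are the weighted clique and chromatic numbers and $\chi$ is the ordinary chromatic number from the definition of perfection.

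For the forward direction, assume $G$ is perfect. The engine is Lov\'asz's \emph{Replication Lemma}: substituting a clique for a vertex of a perfect graph again yields a perfect graph. Granting that $\overline G$ is perfect (the weak perfect graph theorem), the clique blow-up $(\overline G)^w$, obtained by replacing each vertex $v$ by a clique of size $w_v$, is therefore perfect, and the defining identity $\chi=\omega$ applied to $(\overline G)^w$ gives $\omega\bigl((\overline G)^w\bigr)=\chi\bigl((\overline G)^w\bigr)$. This unwinds to $\omega_w(\overline G)=\chi_w(\overline G)$: a clique of $(\overline G)^w$ is a weighted clique of $\overline G$, and an optimal proper coloring splits into stable sets that descend to an integral weighted coloring of $\overline G$. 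That is precisely the min--max relation of the first step, and it moreover exhibits an \emph{integral} optimum, so $Q_G$ is an integral polytope; its $0/1$ vertices are exactly indicators of stable sets, yielding $Q_G \subseteq P_G$.

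For the converse, suppose $P_G = Q_G$ and let $H = G[U]$ be an arbitrary induced subgraph. Intersecting with the coordinate subspace $\{x_i = 0 : i \notin U\}$ carries $P_G$ onto $P_H$ and $Q_G$ onto $Q_H$: the stable sets of $H$ are the stable sets of $G$ contained in $U$, and each maximal clique of $H$ is the trace on $U$ of a maximal clique of $G$ (extend $C$ to a maximal clique $C'$ of $G$ and use maximality of $C$ in $H$ to get $C' \cap U = C$), so the two clique systems restrict to one another. Hence $P_H = Q_H$ for every induced $H$, and the min--max step forces $\omega(\overline H) = \chi(\overline H)$ for all induced subgraphs $\overline H$ of $\overline G$; that is, $\overline G$ is perfect, whence $G$ is perfect by the weak perfect graph theorem.

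The genuine depth is concentrated in the Replication Lemma together with the weak perfect graph theorem, i.e.\ the complementation-invariance of perfection; the remainder is duality and bookkeeping. I expect the main obstacle to be organizing these two inputs without circularity: the cleanest proofs of the weak perfect graph theorem run through essentially this polyhedral (antiblocking) argument, so a self-contained account must establish the Replication Lemma first and then bootstrap the complementation symmetry, rather than quoting it as a black box.
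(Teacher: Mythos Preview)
The paper does not prove this theorem: it is quoted as a background result with a citation to Lov\'asz, and the weak perfect graph theorem is then recorded as a corollary, likewise cited. There is therefore no proof in the paper to compare your argument against.

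On your sketch itself: the outline is the standard one, and your final paragraph correctly names the real issue. As written, \emph{both} directions invoke the weak perfect graph theorem, which the paper presents as a consequence of the very statement you are proving, so the argument is circular unless you first supply an independent proof of complementation-invariance. Concretely, the Replication Lemma applied to a perfect $G$ yields $\omega_w(G)=\chi_w(G)$ for all nonnegative integer weights $w$, which translates polyhedrally into $P_{\overline G}=Q_{\overline G}$, not $P_G=Q_G$; passing from one to the other is exactly where WPGT enters. Likewise, in your converse the restriction argument delivers $\alpha(H)=\theta(H)$ for every induced $H$, i.e.\ that $\overline G$ is perfect, and you again appeal to WPGT to finish. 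The clean repairs are either Lov\'asz's original combinatorial proof of WPGT (Replication Lemma plus a counting argument on a minimal imperfect graph) or Fulkerson's antiblocking framework, which derives WPGT and the polyhedral characterization simultaneously from the single input that replication preserves perfection. Your roadmap is correct once one of these is inserted; as it stands the dependence on WPGT is genuine and unresolved.
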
 	

This result is relevant for us, since bipartite graphs are perfect. By the weak perfect graph theorem, their complements are also perfect.

\begin{cor}[Weak perfect graph theorem, \cite{Lovasz}]
    A graph $G$ is perfect if and only if $\overline G$ is perfect.
\end{cor}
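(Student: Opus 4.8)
The plan is to route the entire biconditional through a single, manifestly self-complementary numerical invariant, so that the ``if and only if'' becomes essentially automatic once the right characterization of perfection is established. First I would fix the dictionary between $G$ and $\overline G$: a clique of $\overline G$ is a stable set of $G$ and vice versa, so $\omega(\overline G[W]) = \alpha(G[W])$ and $\alpha(\overline G[W]) = \omega(G[W])$ for every vertex subset $W$, while $|V(\overline G[W])| = |V(G[W])|$ and $\overline{\overline G} = G$. The key observation is that the product $\alpha \cdot \omega$ and the vertex count are preserved under complementation. I would therefore aim to prove \emph{Lov\'asz's characterization}: a graph $G$ is perfect if and only if every induced subgraph $H$ satisfies $\alpha(H)\,\omega(H) \geq |V(H)|$. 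Because this hereditary condition is literally the same family of inequalities for $G$ and for $\overline G$, the corollary drops out at once: $G$ satisfies the condition iff $\overline G$ does, hence $G$ is perfect iff $\overline G$ is perfect.

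The forward (easy) direction of the characterization is quick: if $G$ is perfect then so is every induced subgraph $H$, so $\chi(H) = \omega(H)$; an optimal proper coloring partitions $V(H)$ into $\omega(H)$ stable sets, each of size at most $\alpha(H)$, giving $|V(H)| \leq \alpha(H)\,\omega(H)$. The substance is the reverse direction, which I would attack by contradiction using a \emph{minimal imperfect graph} $G$: one for which every proper induced subgraph is perfect but $\chi(G) > \omega(G)$. The engine is the \emph{replication (vertex-multiplication) lemma}: duplicating a vertex $v$ of a perfect graph---adjoining a new vertex $v'$ adjacent to $v$ and to all neighbors of $v$---again yields a perfect graph. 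I would prove this by induction on $|V(G)|$, reducing to the inequality $\chi(G') \leq \omega(G')$ for the enlarged graph $G'$ and splitting into two cases according to whether $v$ lies in a maximum clique: if it does, a new color for $v'$ suffices; if it does not, every $\omega(G)$-clique meets the color class of $v$ outside $v$, so deleting that part lowers the clique number and one recolors the remainder, then repaints the leftover stable set together with $v'$ in a single fresh color. Iterating shows that replacing each vertex by a clique of prescribed size preserves perfection.

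The main obstacle is the concluding \emph{counting argument} that rules out a minimal imperfect $G$ satisfying $\alpha(G)\,\omega(G) \geq |V(G)|$. Using perfection of the proper induced subgraphs together with the multiplication lemma, one builds a family of maximum stable sets and of $\omega(G)$-cliques whose incidence structure is rigid enough---concretely, by exhibiting a nonsingular incidence matrix in the style of Gasparian's linear-algebra proof---to force the exact identity $|V(G)| = \alpha(G)\,\omega(G) + 1$. This contradicts the assumed inequality $\alpha(G)\,\omega(G) \geq |V(G)|$, so no such $G$ exists and the reverse direction of the characterization holds. Assembling the pieces proves the self-complementary characterization, and with it the corollary; as a byproduct one recovers the concrete translation that $\overline G$ is perfect exactly when $\theta(G[W]) = \alpha(G[W])$ for every $W$, where $\theta$ denotes the clique-cover number.
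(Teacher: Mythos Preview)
The paper does not supply a proof of this corollary at all: it is stated with a citation to Lov\'asz and used as background, so there is no argument to compare your proposal against. Your sketch is the standard modern route---establish Lov\'asz's self-complementary characterization $\alpha(H)\,\omega(H)\geq |V(H)|$ for all induced $H$, with the hard direction handled via the replication lemma and a Gasparian-style nonsingular incidence matrix forcing $|V(G)|=\alpha(G)\omega(G)+1$ in a minimal imperfect graph---and it is correct in outline, though of course the ``rigid incidence structure'' step is where all the content hides and would need to be written out in full. Since the paper treats the result as a black box, your write-up would be strictly \emph{more} than what the paper provides; the only question is whether the surrounding context wants a proof here at all, and the paper's answer is no.
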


Therefore, $P_{\overline{D(G)}}$ is the stable set polytope of a perfect graph, which in particular induces further constraints on the associated $h^{\ast}$-polynomial.
In fact, \eqref{eq: h-star bounds} implies that, for any connected graph $G$, $h^*_0 = h^*_{n-1} = 1$. 
In light of these bounds, we pose the following problem.

\begin{problem}
    Determine the sharpest possible lower- and upper-bounds on each $h_i^*$ for $\APQ_G$ for large classes of graphs.
\end{problem}

It is for this reason, together with Observation~\ref{obs: PQ face of stable}, that we consider stable set polytopes.

Our interest in perfectly matchable set polynomials is due to the success found by Ohsugi and Tsuchiya in using them to determine $h^*(\APQ_G)$ when $G$ is a \emph{wheel} graph, i.e., $G$ is the join of a cycle and a single vertex \cite{OhsugiTsuchiyaPQ}. 
The notion of a perfectly matchable set is originally due to Balas and Pulleyblank \cite{BalasPulleyblankGeneral, BalasPulleyblankBipartite} in the context of linear optimization over polytopes.
Their work has since been built upon, e.g. \cite{CunninghamGreen-Krotki, Sharifov}, but the perfectly matchable set polynomial $p(G; z)$ itself was not introduced until Ohsugi and Tsuchiya themselves in \cite{OhsugiTsuchiyaInterior}.

These results, and the desire to understand when the bounds of \eqref{eq: h-star bounds} can be sharpened, provide the motivation for our work.

%%%%%%%%%%%%%%%%%%%%%%%%%%%%%%%%%%%%%%%%%%
%%%%%%%%%%%%%%%%%%%%%%%%%%%%%%%%%%%%%%%%%%
%%%%%%%%%%%%%%%%%%%%%%%%%%%%%%%%%%%%%%%%%%

\section{Recurrences for perfectly matchable set polynomials}\label{sec: recurrences}

Recall that a \emph{matching} of $G$ is a set $\calM$ of edges in $G$ such that no two distinct edges in $\calM$ share a common endpoint.
Let $m(G; z)$ denote the \emph{matching polynomial} $m(G; z) = \sum_{k \geq 0} m_kz^k$, where $m_k$ is the number of matchings of size $k$ in $G$.
The matching polynomial of $G$ is a specialization of the bivariate polynomial
$M(G;x,y) = \sum_{k \geq 0} a_kx^{n-2k}y^k$ where $n$ is the number of vertices of $G$.
This specialization is given by
\begin{equation}\label{eq: specialization}
	m(G; z) = z^nM\left(G;\frac{1}{z},\frac{1}{z}\right).
\end{equation}
The polynomials $p(G; z)$ and $m(G; z)$ are genuinely distinct: returning to the example of $C_4$, we have $m(C_4; z) = 1 + 4z + 2z^2$ while $p(G; z) = 1 + 4z + z^2$. 

There are two recurrences, due to Farrell \cite{Farrell}, which the function $M$ satisfies and which will be of interest to us as we study $p(G; z)$.
If $e$ is an edge of $G$, we denote by $G \setminus e$ the deletion of $e$ from $G$.
To help distinguish between deleting an edge and deleting a vertex $v$, we use $G - v$ to denote the deletion of a vertex, or $G - \{v,w\}$ if two vertices, $v$ and $w$, are deleted.

\begin{thm}[{\cite[Theorems 1 and 2]{Farrell}}]\label{thm: farrell}
	Let $G$ be any graph.
	\begin{enumerate}
		\item If $e = vw$ is an edge of $G$, then
			\[
				M(G; x,y) = M(G \setminus e;x,y) + yM(G - \{v,w\}; x,y)
			\]
		\item If $v$ is a vertex of $G$ with neighbors $w_1,\dots,w_r$, then
			\[
				M(G; x,y) = xM(G - v; x,y) + y\sum_{i=1}^r M(G - \{v,w_i\};x,y).
			\]
	\end{enumerate}
\end{thm}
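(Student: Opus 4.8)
The plan is to prove both recurrences combinatorially by classifying matchings according to how a distinguished edge or vertex participates. Since $M(G;x,y)=\sum_k a_k x^{n-2k}y^k$ is a generating function that records matchings by their size (with $a_k$ the number of matchings using $k$ edges, hence $2k$ of the $n$ vertices, leaving $n-2k$ unmatched), the weight $x^{n-2k}y^k$ can be read as assigning $x$ to each unmatched vertex and $y$ to each matched edge. This reinterpretation is the key to making the bookkeeping transparent, and I would state it explicitly at the start.

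For part (1), I would fix the edge $e=vw$ and partition the set of all matchings of $G$ into those that do \emph{not} contain $e$ and those that \emph{do}. The matchings avoiding $e$ are in bijection with matchings of $G\setminus e$, and since deletion of an edge does not change the vertex set, their total weighted contribution is exactly $M(G\setminus e;x,y)$. For the matchings containing $e$, removing the edge $e$ together with its two endpoints $v,w$ yields a bijection with matchings of $G-\{v,w\}$; each such matching in $G$ has one more edge (contributing a factor $y$) and two fewer unmatched vertices than its image, while the ambient vertex count drops by $2$, so the weights match up to the global factor $y$. Summing gives $M(G;x,y)=M(G\setminus e;x,y)+yM(G-\{v,w\};x,y)$.

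For part (2), I would instead fix a vertex $v$ with neighbors $w_1,\dots,w_r$ and partition all matchings of $G$ according to whether $v$ is unmatched or matched, and in the latter case according to which neighbor $w_i$ it is matched to. Matchings leaving $v$ unmatched correspond bijectively to matchings of $G-v$; here $v$ contributes a factor $x$ as an unmatched vertex, so these contribute $x\,M(G-v;x,y)$. Matchings in which $v$ is matched to a specific $w_i$ correspond bijectively to matchings of $G-\{v,w_i\}$, with the edge $vw_i$ contributing a factor $y$; summing over $i$ gives $y\sum_{i=1}^r M(G-\{v,w_i\};x,y)$. These cases are mutually exclusive and exhaustive, yielding the stated identity.

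The routine verification is checking that the vertex-and-edge exponent bookkeeping is consistent under each deletion, and the only genuine subtlety is ensuring the bijections are well-defined and that the index shift $n\mapsto n-2$ under double-vertex deletion is correctly absorbed into the factor $y$ rather than double-counting unmatched vertices. I expect this exponent bookkeeping to be the main (though modest) obstacle; once the weight interpretation ``$x$ per exposed vertex, $y$ per matching edge'' is pinned down, both recurrences follow by straightforward case analysis. Indeed, since these are Farrell's results quoted from \cite{Farrell}, I would likely present a short self-contained argument along these lines rather than reproving them in full detail.
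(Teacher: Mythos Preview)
Your argument is correct and is precisely the standard combinatorial proof of Farrell's recurrences: partition matchings according to whether the distinguished edge is used (for part~(1)) or according to how the distinguished vertex is saturated (for part~(2)), and track the weight $x^{\#\text{exposed vertices}}y^{\#\text{matching edges}}$ under each bijection. The exponent bookkeeping you flag as the only subtlety is handled correctly.

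There is nothing to compare against, however: the paper does not supply its own proof of this theorem. It is stated with attribution to \cite[Theorems~1 and~2]{Farrell} and used as a black box to derive Proposition~\ref{prop: p(G; z) with no even cycles} via the specialization~\eqref{eq: specialization} and Lemma~\ref{lem: no even cycles}. Your self-contained sketch is therefore more than the paper itself provides for this particular result.
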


Ohsugi and Tsuchiya noted \cite[Example 2.3]{OhsugiTsuchiyaPQ} the following result, which has a straightforward proof. 
Since we wish to use this result and could not find an explicit proof of it within the existing literature, we present it below.

\begin{lem}\label{lem: no even cycles}
    For a graph $G$, $p(G; z) = m(G; z)$ if and only if $G$ has no even cycles.
\end{lem}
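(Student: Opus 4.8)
The plan is to compare the two polynomials coefficient by coefficient and reduce the claimed equivalence to a statement about uniqueness of perfect matchings. First I would observe that a matching of size $k$ is exactly a perfect matching of the subgraph induced by the $2k$ vertices it covers; conversely, every perfectly matchable set $S$ with $|S| = 2k$ arises this way, from any one perfect matching of $G[S]$. Grouping the matchings of size $k$ according to their vertex support therefore yields
\[
    m_k \;=\; \sum_{\substack{S \subseteq V(G),\ |S| = 2k \\ S \text{ perfectly matchable}}} \mu(S),
\]
where $\mu(S)$ denotes the number of perfect matchings of $G[S]$, while $p_{2k}$ counts the very same sets $S$, each with weight $1$. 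Since $\mu(S) \geq 1$ for every perfectly matchable $S$, this gives $m_k \geq p_{2k}$ for all $k$, with equality precisely when $\mu(S) = 1$ for every perfectly matchable set $S$ of size $2k$.

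Consequently $p(G; z) = m(G; z)$ if and only if every perfectly matchable set $S$ of $G$ induces a subgraph $G[S]$ with a \emph{unique} perfect matching. The remaining task is to identify this uniqueness condition with the absence of even cycles. For this I would invoke the standard fact that if $M_1 \neq M_2$ are two perfect matchings of a graph $H$, then their symmetric difference $M_1 \triangle M_2$ is a nonempty disjoint union of cycles, each of which alternates between edges of $M_1$ and edges of $M_2$ and hence has even length. Thus a graph admits two distinct perfect matchings only if it contains an even cycle.

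For the forward direction, suppose $G$ has no even cycle. Then no induced subgraph $G[S]$ contains an even cycle, so by the symmetric-difference observation no such $G[S]$ can have two distinct perfect matchings; hence $\mu(S) = 1$ for every perfectly matchable $S$, and $p(G; z) = m(G; z)$. For the converse I argue contrapositively: if $G$ contains an even cycle $C$ of length $2\ell$, set $S = V(C)$. The two alternating edge sets of $C$ are distinct perfect matchings of $C$, and since the edges of $C$ lie in $G[S]$ they remain distinct perfect matchings of $G[S]$ (any chords only increase $\mu(S)$). Thus $\mu(S) \geq 2$, whence $m_\ell > p_{2\ell}$ and $p(G; z) \neq m(G; z)$. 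The step I expect to need the most care is the bookkeeping in the first paragraph, namely correctly accounting for the fact that one perfectly matchable set may be the vertex support of several matchings, together with a clean statement of the symmetric-difference lemma; once these are in place, the even-cycle characterization itself is essentially immediate.
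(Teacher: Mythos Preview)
Your proof is correct and follows essentially the same approach as the paper: reduce the equality $p(G;z)=m(G;z)$ to the statement that every perfectly matchable set has a unique perfect matching, and then identify that uniqueness with the absence of even cycles via the structure of $\calM \cup \calM'$ (the paper) or $M_1 \triangle M_2$ (you). Your explicit identity $m_k = \sum_S \mu(S)$ and use of the symmetric difference make the argument slightly crisper than the paper's version, but the substance is the same.
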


\begin{proof}
    First, if $p(G; z) = m(G; z)$, then assume an even cycle $C$ with $k$ edges exists in $G$. 
    The subgraph of $G$ induced by the vertices of $C$ must then have at least two perfect matchings, since each even cycle contains exactly two. 
    Thus, the coefficient of $z^k$ in $m(G; z)$ is strictly greater than the corresponding coefficient in $p$, which is a contradiction.
    
    Conversely, suppose $G$ has no even cycles. 
    If there is an induced subgraph $G[S]$ with $2k$ vertices which contains two perfect matchings $\calM$ and $\calM'$, then each vertex in the subgraph $H = (S,\calM \cup \calM')$ has degree $1$ or $2$, hence $H$ is a disjoint union of paths and/or cycles.
    
    If $P$ is a path in $H$, then let $e$ be an edge with an endpoint $u$ of degree $1$ in $H$. 
    Then $u$ is covered by exactly one edge in each of $\calM$ and $\calM'$, so $e$ is in both matchings.
    It follows that $e$ is the only edge of $P$.
    Therefore, any path in $H$ consists of a a single edge that is in both $\calM$ and $\calM'$.
    
    Now, since $\calM$ and $\calM'$ are distinct perfect matchings, there must be at least one cycle in $H$.
    However, the edges in this cycle must alternate with edges from $\calM$ and $\calM'$, so the cycle is even, which is a contradiction.
    Therefore, every subgraph of $G$ induced by an even number of vertices has a unique perfect matching.
    Hence, $p(G; z) = m(G; z)$.
\end{proof}

Putting Lemma~\ref{lem: no even cycles} together with Theorem~\ref{thm: farrell} and \eqref{eq: specialization}, we obtain the following.

\begin{prop}\label{prop: p(G; z) with no even cycles}
    If $G$ is a graph with no even cycles and $e = vw$ is an edge of $G$, then
    \[
        p(G; z) = p(G \setminus e; z) + zp(G - \{v,w\}; z).
    \]
    Moreover, if $v$ is a vertex of $G$ with neighbors $w_1,\dots,w_r$, then
	\[ \pushQED{\qed}
		p(G; z) = p(G - v; z) + z\sum_{i=1}^r p(G - \{v,w_i\}; z).
		\qedhere\popQED
	\]
\end{prop}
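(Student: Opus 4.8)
The strategy is exactly the one signposted in the sentence preceding the statement: convert from $p$ to $m$ using Lemma~\ref{lem: no even cycles}, apply Farrell's recurrences of Theorem~\ref{thm: farrell} to $M$, push them through the specialization \eqref{eq: specialization}, and then convert back from $m$ to $p$. The one preliminary observation I would record first is that the no-even-cycles hypothesis is inherited by every relevant subgraph: any cycle in $G \setminus e$, $G - \{v,w\}$, $G - v$, or $G - \{v,w_i\}$ is also a cycle in $G$, so none of these subgraphs has an even cycle either. This is what licenses applying Lemma~\ref{lem: no even cycles} not only to $G$ but simultaneously to each term appearing on the right-hand side.

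For the edge recurrence, I would start from Theorem~\ref{thm: farrell}(1), set $x = y = 1/z$, and multiply through by $z^{n}$ where $n = |V(G)|$. The key bookkeeping is tracking vertex counts: $G \setminus e$ has $n$ vertices while $G - \{v,w\}$ has $n-2$, so that $z^{n} M(G;\tfrac1z,\tfrac1z) = m(G;z)$, $z^{n} M(G\setminus e;\tfrac1z,\tfrac1z) = m(G\setminus e;z)$, and the term $z^{n} \cdot \tfrac1z \, M(G-\{v,w\};\tfrac1z,\tfrac1z) = z \cdot z^{n-2} M(G-\{v,w\};\tfrac1z,\tfrac1z) = z\,m(G-\{v,w\};z)$. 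This yields $m(G;z) = m(G\setminus e;z) + z\,m(G-\{v,w\};z)$, and then Lemma~\ref{lem: no even cycles}, applied to all three graphs, replaces each $m$ by the corresponding $p$.

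The vertex recurrence is handled identically from Theorem~\ref{thm: farrell}(2): with $x=y=1/z$ and multiplication by $z^n$, the $x\,M(G-v;\cdot)$ term becomes $z^{n-1}M(G-v;\tfrac1z,\tfrac1z) = m(G-v;z)$ since $G-v$ has $n-1$ vertices, while each $y\,M(G-\{v,w_i\};\cdot)$ term becomes $z \cdot m(G-\{v,w_i\};z)$ exactly as above. Converting back through the Lemma gives the second displayed identity.

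The only real point of care, and the place where a sign or exponent error would creep in, is the second step: matching the powers of $z$ produced by the specialization against the coefficients $1$ and $z$ claimed in the statement. Because the $1/z$ inside $M$ interacts with the differing vertex counts of $G$, $G-v$, and $G-\{v,w\}$, one must verify that the net exponents collapse to precisely $z^0$ on the deletion terms that share a vertex with $G$ of the appropriate parity and $z^1$ on the two-vertex-deletion terms. Everything else is a direct substitution, so I expect this exponent accounting to be the whole substance of the argument.
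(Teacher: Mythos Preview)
Your proposal is correct and matches the paper's intended argument exactly: the paper itself gives no separate proof, simply prefacing the proposition with ``Putting Lemma~\ref{lem: no even cycles} together with Theorem~\ref{thm: farrell} and \eqref{eq: specialization}, we obtain the following'' and closing with a \qedsymbol. You have faithfully unpacked that sentence, including the necessary observation that the no-even-cycles hypothesis is inherited by each subgraph, and your exponent bookkeeping in the specialization is correct.
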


The following is a straightforward consequence.
In it, we will use a specialization of a bivariate polynomial introduced by Hoggatt and Long \cite{HoggattLong}: set $u_0(x,y) = 0$, $u_1(x,y)=1$, and 
\[
    u_n(x,y) = xu_{n-1}(x,y) + yu_{n-2}(x,y).
\]
for $n \geq 2$.
Note that the \emph{Fibonacci polynomials} $F_n(x)$ are exactly those polynomials obtained by setting $y=1$.
We also use the notation $P_n$ to denote the path on $n$ vertices.

\begin{cor}\label{cor: paths and cycles}
    For all $n \geq 1$, $p(P_n; z) = u_{n+1}(1,z)$ 
    and for $n \geq 3$,
    \[
            p(C_n; z) = 
                \begin{cases}
                    u_{n+1}(1,z) + zu_{n-1}(1,z) & \text{ if } n \text{ odd}\\
                    u_{n+1}(1,z) + zu_{n-1}(1,z) - z^{n/2} & \text{ if } n \text{ even}.
                \end{cases}
    \]
\end{cor}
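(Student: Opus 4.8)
The plan is to treat the path and cycle cases separately, using the recurrences of Proposition~\ref{prop: p(G; z) with no even cycles} where they apply and falling back on the matching polynomial when they do not.

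First I would handle paths by induction on $n$. A path $P_n$ contains no cycles at all, so Proposition~\ref{prop: p(G; z) with no even cycles} applies. Choosing $v$ to be an endpoint of $P_n$, which has a single neighbor $w$, the vertex recurrence gives
\[
    p(P_n; z) = p(P_n - v; z) + z\, p(P_n - \{v,w\}; z).
\]
Since $P_n - v = P_{n-1}$ and $P_n - \{v,w\} = P_{n-2}$, this reads $p(P_n; z) = p(P_{n-1}; z) + z\, p(P_{n-2}; z)$, which is precisely the defining recurrence of $u_{n+1}(1,z)$ with the index shifted by one. After checking the base cases $p(P_1; z) = 1 = u_2(1,z)$ and $p(P_2; z) = 1 + z = u_3(1,z)$, induction yields $p(P_n; z) = u_{n+1}(1,z)$ for all $n \geq 1$.

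For odd cycles the same engine works, because when $n$ is odd the only cycle in $C_n$ is $C_n$ itself, which is odd, so $C_n$ has no even cycles and Proposition~\ref{prop: p(G; z) with no even cycles} again applies. Picking any edge $e = vw$ of $C_n$, the edge recurrence gives $p(C_n; z) = p(C_n \setminus e; z) + z\, p(C_n - \{v,w\}; z)$. Here $C_n \setminus e = P_n$ and $C_n - \{v,w\} = P_{n-2}$, so by the path formula $p(C_n; z) = u_{n+1}(1,z) + z\, u_{n-1}(1,z)$, as claimed.

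The hard part is the even case, where Proposition~\ref{prop: p(G; z) with no even cycles} no longer applies since $C_n$ is itself an even cycle. My plan is to step back to the matching polynomial, whose recurrences (Theorem~\ref{thm: farrell}, specialized via~\eqref{eq: specialization}) hold for every graph regardless of parity. The edge recurrence for $m$ gives $m(C_n; z) = m(P_n; z) + z\, m(P_{n-2}; z)$, and since paths have no even cycles, Lemma~\ref{lem: no even cycles} lets me replace each $m(P_k; z)$ by $p(P_k; z) = u_{k+1}(1,z)$, yielding $m(C_n; z) = u_{n+1}(1,z) + z\, u_{n-1}(1,z)$. It then remains to compare $p(C_n; z)$ with $m(C_n; z)$. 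Every proper induced subgraph of $C_n$ is a disjoint union of paths and hence has at most one perfect matching, while $C_n$ itself has exactly two perfect matchings, both covering all $n$ vertices. Thus $m$ overcounts $p$ by exactly one in the degree-$n/2$ term, i.e. $p(C_n; z) = m(C_n; z) - z^{n/2}$, which gives the stated even-case formula. The only real subtlety to verify carefully is this last combinatorial claim that $C_n$ is the unique induced subgraph contributing a surplus matching.
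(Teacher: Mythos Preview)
Your proposal is correct and follows essentially the same approach as the paper: invoke the recurrences of Proposition~\ref{prop: p(G; z) with no even cycles} for paths and odd cycles, and for even cycles compute $m(C_n;z)$ via the matching-polynomial recurrence and then observe that $p(C_n;z)$ differs from it only in the top coefficient. Your write-up is simply more explicit than the paper's one-sentence proof, particularly in spelling out the appeal to Theorem~\ref{thm: farrell} and~\eqref{eq: specialization} for the even case and in justifying why the discrepancy is exactly $z^{n/2}$.
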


\begin{proof}
    Both claims follow from Proposition~\ref{prop: p(G; z) with no even cycles}, Lemma~\ref{lem: no even cycles}, and the recognition that $p(C_n; z)$ differs from $m(C_n; z)$ only in the top coefficient.
\end{proof}

See Table~\ref{tab: paths and cycles} for polynomials $p(P_n; z)$ and $p(C_n; z)$ for small $n$.

\begin{table}
    \begin{center}
    \begin{tabular}{|c|l|l|} \hline
        $n$ & $p(P_n; z)$ & $p(C_n; z)$ \\ \hline
        $1$ & $1$ &  - \\
        $2$ & $1+z$ & - \\
        $3$ & $1+2z$ & $1 + 3z$ \\
        $4$ & $1+3z+z^2$ & $1+4z+z^2$ \\
        $5$ & $1+4z+3z^2$ & $1 + 5z + 5z^2$ \\ 
        $6$ & $1+5z+6z^2+z^3$ & $1+6z+9z^2+z^3$ \\
        $7$ & $1+6z + 10z^2 + 4z^3$ & $1+7z+14z^2+7z^3$ \\
        $8$ & $1+7z+15z^2+10z^3+z^4$ & $1+8z+20z^2+16z^3+z^4$ \\ 
        $9$ & $1+8z+20z^2+16z^3+z^4$ & $1+9z+27z^2+30z^3+9z^4$\\ \hline
    \end{tabular}\vspace{1mm}
    \end{center}
    \caption{The perfectly matchable set polynomials for $P_n$ and $C_n$ with $n \leq 7$.}\label{tab: paths and cycles}
\end{table}

Although Proposition~\ref{prop: p(G; z) with no even cycles} assumes that $G$ has no even cycles, the recurrences do hold for a much larger class of graphs.

\begin{thm}\label{thm: perf match sets recurrence}
    Suppose $G$ is a graph with an edge $e = vw$.
    If $e$ is contained in no even cycle, then
    \begin{equation}\label{eq: main recurrence}
        p(G; z) = p(G \setminus e; z) + zp(G - \{v,w\}; z).
    \end{equation}
    Consequently, if $v$ is a vertex of $G$ contained in no even cycle and has neighbors $w_1,\dots,w_r$, then
	\begin{equation}\label{eq: main recurrence 2}
		p(G; z) = p(G - v; z) + z\sum_{i=1}^r p(G - \{v,w_i\}; z).
	\end{equation}
\end{thm}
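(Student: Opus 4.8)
The plan is to prove the edge-recurrence \eqref{eq: main recurrence} by a direct bijective/counting argument on perfectly matchable sets, classifying them according to how they interact with the edge $e = vw$, and then derive the vertex-recurrence \eqref{eq: main recurrence 2} as a consequence by iterating \eqref{eq: main recurrence} over the edges incident to $v$. The key structural input is the hypothesis that $e$ lies in no even cycle; this is exactly what guarantees that the multiplicity issue seen in Lemma~\ref{lem: no even cycles} (two distinct perfect matchings on the same vertex set) cannot be introduced by the edge $e$, so that counting matchings through $e$ counts \emph{sets} through $e$ without overcounting.

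First I would fix a size $2k$ and compare coefficients of $z^k$ on both sides of \eqref{eq: main recurrence}. A perfectly matchable set $S$ of $G$ of size $2k$ either (a) admits a perfect matching of $G[S]$ that does \emph{not} use the edge $e$, or (b) admits a perfect matching of $G[S]$ and every such matching uses $e$. Sets of type (a) are precisely the perfectly matchable sets of $G \setminus e$ of size $2k$, contributing to $p(G \setminus e; z)$. For sets of type (b), $e$ is forced into every perfect matching of $G[S]$, so $\{v,w\} \subseteq S$ and $S \setminus \{v,w\}$ is a perfectly matchable set of $G - \{v,w\}$ of size $2k-2$; conversely any perfectly matchable set $S'$ of $G - \{v,w\}$ gives the set $S' \cup \{v,w\}$, which is perfectly matchable in $G$ via $e$ together with a matching of $G[S']$. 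This pairing produces the term $z\,p(G-\{v,w\};z)$. The delicate point is that a given $S$ might be counted once on the left but must land in exactly one of the two buckets on the right; since (a) and (b) are mutually exclusive by construction, and every perfectly matchable $S$ of $G$ falls into exactly one, the count is exact provided the bijection in case (b) is genuinely a bijection rather than a many-to-one map.

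The hard part will be verifying that case (b) is well defined, namely that $S' \mapsto S' \cup \{v,w\}$ does not accidentally produce a set that \emph{also} has a perfect matching avoiding $e$ (which would place it in bucket (a) as well, breaking exclusivity). This is precisely where the no-even-cycle hypothesis on $e$ enters: if $S \ni v,w$ had both a matching using $e$ and one avoiding $e$, then the symmetric difference of these two matchings would be a disjoint union of paths and even cycles, and since both are perfect matchings of $G[S]$ the symmetric difference is a union of even cycles; the component containing $e$ would be an even cycle through $e$, contradicting the hypothesis. Thus under the hypothesis, case (b) sets have \emph{no} $e$-avoiding matching, the two buckets are truly disjoint, and the bijection holds. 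I would isolate this symmetric-difference argument as the crux, modeled on the second paragraph of the proof of Lemma~\ref{lem: no even cycles}.

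Finally, to obtain \eqref{eq: main recurrence 2}, I would apply \eqref{eq: main recurrence} successively to each edge $vw_1, vw_2, \dots, vw_r$ incident to $v$. Each application strips one edge at $v$ and produces a term $z\,p(G-\{v,w_i\};z)$; after removing all $r$ edges the vertex $v$ becomes isolated, and a perfectly matchable set can never contain an isolated vertex (it has no edge to be matched along), so $p$ of the resulting graph equals $p(G - v; z)$. One must check that each edge $vw_i$ remains even-cycle-free in the successive subgraphs so that \eqref{eq: main recurrence} applies at every stage; this follows because deleting edges can only destroy cycles, never create them, and the hypothesis that $v$ lies in no even cycle guarantees each incident edge starts out even-cycle-free in $G$. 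Summing the collected $z\,p(G-\{v,w_i\};z)$ terms against the residual $p(G-v;z)$ yields \eqref{eq: main recurrence 2}.
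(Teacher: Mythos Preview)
Your proposal is correct and follows essentially the same approach as the paper: partition the perfectly matchable sets of $G$ into those with some $e$-avoiding perfect matching and those whose every perfect matching uses $e$, use the symmetric-difference argument (as in Lemma~\ref{lem: no even cycles}) to show these two classes exhaust all possibilities under the no-even-cycle hypothesis on $e$, and then iterate over the edges at $v$ to obtain~\eqref{eq: main recurrence 2}. The paper organizes the dichotomy slightly differently (first proving ``all or none use $e$'' and then defining the two sets), but the content and the key structural input are identical; your treatment of the second recurrence is in fact more explicit than the paper's, spelling out that the isolated vertex $v$ can be dropped and that edge deletion preserves the even-cycle-free condition.
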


\begin{proof}
    First we observe that if $S$ is a perfectly matchable set of $G$, then either every perfect matching of $G[S]$ contains $e$ or none of them contain $e$.
    Indeed, if $\calM$ and $\calM'$ were perfect matchings of $G[S]$ so that $e \in \calM$ but $e \notin \calM'$, then, as in the proof of Lemma~\ref{lem: no even cycles}, $e$ must be part of an even cycle that uses a subset of the vertices in $S$.
    This is impossible since $e$ is contained in no even cycle.
    
    This leads us to define two sets: $I$, the collection of perfectly matchable sets $S$ of $G$ such that no perfect matching of $G[S]$ contains $e$, and $E$, the collection of perfectly matchable sets of $S$ such that every perfect matching of $G[S]$ contains $e$.
    By the argument in the previous paragraph, $I$ and $J$ are well-defined, and their union is the collection of all perfectly matchable sets of $G$.
    
    It is straightforward to see that $I$ is, in fact, exactly the collection of perfectly matchable sets in $G \setminus e$. 
    Additionally, $J$ is in bijection with the perfectly matchable sets of $G - \{v,w\}$ via the map $S \mapsto S - \{v,w\}$, the details of which we leave to the reader.
    Therefore,
    \[
        \begin{aligned}
            p(G;z) &= \sum_{S \in I} z^{|S|/2} + \sum_{S \in J} z^{|S|/2} \\
                &= p(G \setminus e;z) + z\sum_{S \in J} z^{|S - \{v,w\}|/2} \\
                &= p(G \setminus e; z) + zp(G - \{v,w\}; z),
        \end{aligned}
    \]
    as claimed.

    To establish \eqref{eq: main recurrence 2}, recognize that since $v$ is contained in no even cycle, then neither is any edge containing $v$.
    The equation follows by iteratively applying \eqref{eq: main recurrence} to the edges of $G$ containing $v$.
\end{proof}

The strongest benefit to Theorem~\ref{thm: perf match sets recurrence} is that it holds even if $G$ has even cycles, as long as $e$ is not a part of one.

Through the remainder of this section, we will use the following auxiliary function: given a graph $G$ and a vertex $v$, set 
\[
    p_v(G; z) = p(G; z) - p(G - v; z) = \sum_{k = 2}^{\left\lfloor \frac{|V(G)|}{2} \right\rfloor} q_{2k}(v)z^k
\]
where $q_{2k}(v)$ is the number of $2k$-element perfectly matchable subsets of $V(G)$ which contain $v$.
More generally, if $S \subseteq V(G)$, we let $p_S(G;z)$ be the generating function for the number of perfectly matchable sets of $G$ containing every element of $S$.
This satisfies the recursively-defined inclusion-exclusion formula
\[
    p_S(G;z) = \sum_{T \subseteq S} (-1)^{|T|}p(G - T;z).
\]
Whenever $S$ contains at most $2$ elements, we will write those elements without the braces in the subscript of $p$.

Note that if $v$ has degree $1$, then any perfectly matchable set $S$ containing $v$ must also contain its neighbor.
All other elements of $S$ must also be a perfectly matchable set, since a perfect matching $\calM$ using the vertices in $S$ requires $v$ to be matched with its neighbor.
Thus, no other edges incident to the neighbor of $v$ may appear in $\calM$.
We record this observation for later reference.

\begin{lem}
    Let $G$ be a graph with a vertex $v$.
    If $\deg(v)=1$ with unique neighbor $w$, then
    \[
        p_v(G; z) = zp(G - \{v,w\}; z).
    \]
\end{lem}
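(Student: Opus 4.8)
The plan is to establish a size-preserving bijection between the perfectly matchable sets of $G$ that contain $v$ and the perfectly matchable sets of $G - \{v,w\}$, under which a set of size $2k$ corresponds to a set of size $2k-2$; the shift of one in the half-size is exactly what produces the factor $z$. Recall that by definition $p_v(G;z)$ is the generating function, by half-size, for the perfectly matchable sets of $G$ containing $v$, so it suffices to count these via the bijection.

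First I would observe that because $\deg(v)=1$, any perfectly matchable set $S$ containing $v$ must also contain $w$. Indeed, fix a perfect matching $\calM$ of $G[S]$; the vertex $v$ must be covered by some edge of $\calM$, and its only incident edge is $vw$, so $vw \in \calM$ and in particular $w \in S$. The same reasoning shows $w$ is matched only to $v$, so $\calM$ restricted to $S \setminus \{v,w\}$ is a perfect matching of the induced subgraph $G[S \setminus \{v,w\}] = (G - \{v,w\})[S \setminus \{v,w\}]$. Hence $S \setminus \{v,w\}$ is a perfectly matchable set of $G - \{v,w\}$. For the reverse direction I would take an arbitrary perfectly matchable set $T$ of $G - \{v,w\}$ with perfect matching $\calN$ and verify that $T \cup \{v,w\}$ is a perfectly matchable set of $G$ containing $v$, witnessed by $\calN \cup \{vw\}$. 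The maps $S \mapsto S \setminus \{v,w\}$ and $T \mapsto T \cup \{v,w\}$ are mutually inverse with $|S| = |T| + 2$, so summing $z^{|S|/2}$ over the left-hand family yields $z \cdot p(G - \{v,w\}; z)$.

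The only point requiring care is the claim that the edge $vw$ is forced into every perfect matching of $G[S]$; once that is pinned down—and it follows immediately from $\deg(v)=1$—the bijection and the resulting degree-shift are routine, so there is no serious obstacle here. As a cleaner alternative I could bypass the bijection entirely by invoking Theorem~\ref{thm: perf match sets recurrence}: the pendant edge $e = vw$ lies in no cycle at all, hence in no even cycle, so $p(G;z) = p(G \setminus e;z) + zp(G - \{v,w\};z)$. Since $v$ becomes isolated in $G \setminus e$, it belongs to no perfectly matchable set there, giving $p(G \setminus e; z) = p(G - v; z)$, and therefore $p_v(G;z) = p(G;z) - p(G - v;z) = zp(G - \{v,w\};z)$, as desired.
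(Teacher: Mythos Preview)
Your bijection argument is correct and is exactly the approach the paper takes: the paragraph preceding the lemma observes that a degree-$1$ vertex must be matched to its unique neighbor, so the remaining vertices form a perfectly matchable set of $G - \{v,w\}$. Your alternative derivation via Theorem~\ref{thm: perf match sets recurrence} is also valid and a nice shortcut, though the paper does not invoke it here.
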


Our next significant result of the section is the following.

\begin{thm}\label{thm: cut vertex}
    Suppose $G$ is a graph with a vertex $u$ and $H$ is a graph with a vertex $w$.
    Let $G'$ be the graph obtained from the disjoint union $G + H$ by identifying $u$ and $w$.
    Then
    \[
        p(G'; z) = p(G; z)p(H; z) - p_u(G; z)p_w(H; z).
    \]
\end{thm}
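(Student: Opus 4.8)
The plan is to classify the perfectly matchable sets of $G'$ according to their behavior at the identified vertex, which I will call $x$ (so $x = u = w$ in $G'$). The crucial structural fact is that $x$ is a cut vertex: every edge of $G'$ lies entirely in the copy of $G$ or entirely in the copy of $H$, and $x$ is the only vertex shared by the two sides. Consequently, for any $S \subseteq V(G')$, writing $S_G = S \cap V(G)$ and $S_H = S \cap V(H)$ (identifying $x$ with both $u$ and $w$), the induced graph $G'[S]$ has no edges joining $S_G \setminus \{x\}$ to $S_H \setminus \{x\}$. I would also record the elementary multiplicativity of $p$ over disjoint unions: a subset of a disjoint union is perfectly matchable iff each of its two parts is, and the sizes add, so $p$ factors as a product. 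The perfectly matchable sets $S$ with $x \notin S$ are then exactly the perfectly matchable sets of $G' - x = (G-u)+(H-w)$, so by multiplicativity they contribute $p(G-u;z)\,p(H-w;z)$.

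The main work, and the step I expect to be the main obstacle, is the case $x \in S$. Here any perfect matching of $G'[S]$ matches $x$ to a unique neighbor, which must lie on the $G$-side or the $H$-side. Using the cut-vertex structure, I would show that $G'[S]$ admits a perfect matching in which $x$ is matched on the $G$-side precisely when $G[S_G]$ has a perfect matching (equivalently, $S_G$ is a perfectly matchable set of $G$ containing $u$) and $S_H \setminus \{x\}$ is a perfectly matchable set of $H - w$; call this condition (P1). Symmetrically, matching $x$ on the $H$-side gives condition (P2). The observation that makes the count clean is that (P1) and (P2) are mutually exclusive for a fixed $S$: condition (P1) forces $|S_G|$ to be even, hence $|S_G \setminus \{x\}|$ odd, while (P2) forces $|S_G \setminus \{x\}|$ even, a parity contradiction. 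Thus the perfectly matchable sets containing $x$ split as a genuine disjoint union with no overlap to subtract. Factoring each generating function across the two sides, using $z^{|S|/2} = z^{|S_G|/2}\cdot z^{|S_H \setminus\{x\}|/2}$ together with $\sum z^{|S_G|/2} = p_u(G;z)$ over sets satisfying the $G$-side half of (P1) and the analogous identities for the remaining factors, yields the contribution $p_u(G;z)\,p(H-w;z) + p(G-u;z)\,p_w(H;z)$.

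Finally I would add the two cases and simplify. This gives
\[
    p(G';z) = p(G-u;z)\,p(H-w;z) + p_u(G;z)\,p(H-w;z) + p(G-u;z)\,p_w(H;z).
\]
Substituting $p_u(G;z) = p(G;z)-p(G-u;z)$ and $p_w(H;z) = p(H;z)-p(H-w;z)$ collapses the right-hand side to $p(G;z)\,p(H-w;z) + p(G-u;z)\,p(H;z) - p(G-u;z)\,p(H-w;z)$, and expanding the claimed expression $p(G;z)\,p(H;z) - p_u(G;z)\,p_w(H;z)$ produces the identical polynomial, establishing the theorem. The only delicate points are the disjointness of (P1) and (P2) via the parity argument and the careful bookkeeping of exponents when factoring each generating function across the $G$- and $H$-sides.
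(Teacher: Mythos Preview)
Your argument is correct. The parity observation that (P1) and (P2) cannot both hold for the same $S$ is exactly the right point, and your bijections and exponent bookkeeping are sound; the final algebraic identification with $p(G;z)\,p(H;z)-p_u(G;z)\,p_w(H;z)$ checks out.

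The paper's proof reaches the same conclusion by the dual route: instead of classifying perfectly matchable sets of $G'$ directly, it starts from pairs $(S,T)\in\PM(G)\times\PM(H)$ and defines a map $f$ into $2^{V(G')}$ by gluing at the identified vertex. It then argues that $\PM(G')\subseteq\img(f)$, that $f$ restricted to $\PM(G-u)\times\PM(H-w)$ is a bijection onto the perfectly matchable sets avoiding the cut vertex, and that among the remaining pairs the only ones failing to hit a perfectly matchable set are those with $u\in S$ and $w\in T$, since then $|f(S,T)|$ is odd. The same parity fact you isolate is what makes that last step work. The advantage of the paper's organization is that the formula $p(G)p(H)-p_u(G)p_w(H)$ falls out immediately, with no algebraic rearrangement needed at the end; the advantage of yours is that you never leave $\PM(G')$ and never have to analyze an auxiliary map, at the cost of a short expansion to match the stated form.
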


\begin{proof}
    We will let $v$ denote the new vertex of $G'$ obtained from identifying $u$ and $w$.
    Let $\PM(\cdot)$ denote the set of perfectly matchable sets of a graph. Consider the function
    \[
        f: \PM(G) \times \PM(H) \to 2^{V(G')}
    \]
    defined by
    \[
        f(S,T) = \begin{cases}
            S \cup T & \text{ if } u \notin S \text{ and } w \notin T \\
            (S \cup T \cup \{v\}) - \{u,w\} & \text{ otherwise}.
        \end{cases}
    \]
    
    First we show that $\PM(G') \subseteq \img(f)$.
    If $X \in \PM(G')$, then let $\calM$ be a perfect matching of $X$ in $G'$.
    Denote by $\calM_G$ the edges in $\calM$ having at least one endpoint in $V(G) - \{u\}$ and, similarly, denote by $\calM_H$ the edges in $\calM$ having at least one endpoint in $V(H) - \{w\}$.
    Let $X_G$ be the subset of $X$ containing all vertices of edges in $\calM_G$, and define $X_H$ analogously. 
    Now, set $\widehat{X}_G$ to be $X_G$ if $v \notin X_G$ and $(X_G - \{v\}) \cup \{u\}$ otherwise.
    Similarly, set $\widehat{X}_H$ to be $X_H$ if $w \notin X_H$ and $(X_H - \{w\}) \cup \{u\}$ otherwise.
    It then immediately follows that $\widehat{X}_G$ is a perfectly matchable set of $G$ and $\widehat{X}_H$ is a perfectly matchable set of 
    $H$ satisfying 
    \[
        X = X_G \cup X_H =  f(\widehat{X}_G,\widehat{X}_H) \in \img(f)
    \]
    as needed.
    Thus, we only need to account for the number of sets in $\img(f)$ which are not a perfectly matchable set of $G'$.
    
    We observe that the restriction of $f$ to the domain $\PM(G - \{u\}) \times \PM(H - \{w\})$
    clearly induces a bijection between pairs $(S,T)$ of perfectly matchable sets satisfying $u \notin S$ and $w \notin T$ with perfectly matchable sets of $G'$ which do not contain $v$.
    So, the only work to be done is when considering pairs $(S,T) \in \PM(G) \times \PM(H)$ where at least one of $u \in S$ or $w \in T$ holds.
    
    Suppose that $X \in \PM(G')$ and $v \in X$.
    Then $|X \cap V(G)|$ is even or $|X \cap V(H)|$ is even, but not both since $v \notin V(G),V(H)$. 
    Without loss of generality, suppose $|X \cap V(H)|$ is even.
    Thus, any perfect matching $\calM$ of $G'[X]$ must have an edge where one endpoint is $v$ and the other endpoint is in $V(G)$. 
    Defining $\widehat{X}_G$ and $\widehat{X}_H$ as before, we find that $X = f(\widehat{X}_G, \widehat{X}_H)$, and -- importantly -- this is the only pair mapping to $X$.
    
    Therefore, any elements of $\img(f)$ not in $\PM(G')$ must arise if both $u \in S$ and $w \in T$.
    When this happens $|f(S,T)|$ is odd, and cannot map to a perfectly matchable set of $G'$.
    
    Passing to the level of perfectly matchable set polynomials gives us the claimed result. 
\end{proof}

In any graph which can be formed as in $G'$ of Theorem~\ref{thm: cut vertex}, the vertex $v$ is said to be a cut-vertex of $G'$.
More precisely, a vertex of a graph $G$ is a \emph{cut-vertex} if deleting it produces a graph with more components. 
Thus, to compute $p(G; z)$ for an arbitrary $G$, it is enough to consider graphs without cut-vertices, also called \emph{$2$-connected graphs}.

Recall that an \emph{ear} of a graph $G$ is a maximal path of $G$ in which every internal vertex has degree $2$ in $G$.
An ear is \emph{closed} if the endpoints of the ear are the same, and is \emph{open} otherwise.
An elementary result of graph theory is that a graph is $2$-connected if and only if it possesses an \emph{open ear decomposition}, which is a sequence of subgraphs $G_0, G_1,\dots, G_r$ such that $G_0$ is a cycle and $G_i$ is an ear of $G_0 \cup \cdots \cup G_{i-1}$ for each $i \geq 1$.
Since we now know how to compute $p(P_n; z)$ and $p(C_n; z)$ for all $n$, we wish to determine a recurrence for $p(G; z)$ in terms of an open ear of $G$.
The primary proof technique used to establish Theorem~\ref{thm: cut vertex} can be extended to address open ears as well.

\begin{cor}\label{cor: attaching open ear}
    Suppose $G'$ is a graph, $P$ is an open ear of $G'$ with endpoints $v$ and $w$, and $G$ is the graph obtained by deleting the internal vertices of $P$.
    The perfectly matchable set polynomial of $G'$ is
    \[
        \begin{aligned}
            p(G'; z) &= p(G; z)p(P; z) - p_v(G; z)p_v(P; z) - p_w(G; z)p_w(P; z) \\
                &{\quad} + p_{v,w}(G; z)p_{v,w}(P; z)  - \sum_{(S,T)} z^{|S \cup T|/2}
        \end{aligned}
    \]
    where the sum is over all pairs $(S,T) \in 2^{V(G)} \times 2^{V(P)}$ satisfying exactly one of the following sets of conditions:
    \begin{enumerate}[label=(\alph*)]
        \item $v,w \in S$ and $v,w \notin T$;
        \item $S$ and $(S - \{v,w\})$ are perfectly matchable sets of $G$; and 
        \item $T$ and $T \cup \{v,w\})$ are perfectly matchable sets of $P$
    \end{enumerate}
    or
    \begin{enumerate}[label=(\alph*$'$)]
        \item $v \in S$, $w \notin S$, $v \notin T$, and $w \in T$;
        \item $S$ and $(S - \{v\}) \cup \{w\}$ are perfectly matchable sets of $G$; and 
        \item $T$ and $(T - \{w\}) \cup \{v\}$ are perfectly matchable sets of $P$.
    \end{enumerate}
\end{cor}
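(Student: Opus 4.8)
\noindent The plan is to mimic the proof of Theorem~\ref{thm: cut vertex}, now combining perfectly matchable sets across the two shared vertices $v$ and $w$. Write $V(G') = V(G) \cup V(P)$ with $V(G) \cap V(P) = \{v,w\}$, so that for $S \subseteq V(G)$ and $T \subseteq V(P)$ we always have $S \cap T \subseteq \{v,w\}$; in particular $S \cap T = \emptyset$ is equivalent to $\{v,w\} \cap S \cap T = \emptyset$. First I would record that the four leading terms of the claimed formula equal the generating function $\sum z^{(|S|+|T|)/2}$ taken over pairs $(S,T) \in \PM(G) \times \PM(P)$ with $S \cap T = \emptyset$: by inclusion--exclusion the coefficient attached to a pair $(S,T)$ is $1$ minus the indicators that $v \in S \cap T$ and that $w \in S \cap T$, plus the indicator that both hold, and this evaluates to $1$ exactly when $S \cap T = \emptyset$ and to $0$ otherwise. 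I then consider the map $\phi(S,T) = S \cup T$ on these disjoint pairs. Because a perfect matching of $G[S]$ together with one of $P[T]$ covers every vertex of $S \cup T$ and, since $S \cap T = \emptyset$, matches each of $v,w$ at most once, the union $\calM \cup \calM'$ of the two matchings is a valid perfect matching, so $\phi(S,T) \in \PM(G')$.

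\noindent Next I would show $\phi$ is onto $\PM(G')$. Given $X \in \PM(G')$ with perfect matching $\calM$, split $\calM$ into its $G$-edges (both endpoints in $V(G)$) and its $P$-edges; since every interior vertex of the ear has degree $2$ in $G'$, it is matched by a $P$-edge, so this genuinely partitions $\calM$, and the covered vertex sets $S,T$ satisfy $S \in \PM(G)$, $T \in \PM(P)$, $S \cap T = \emptyset$, and $\phi(S,T) = X$. The crux is the fiber analysis: a preimage of $X$ is determined by assigning each element of $X \cap \{v,w\}$ to $S$ or to $T$, subject to the resulting sets lying in $\PM(G)$ and $\PM(P)$ respectively, as the interior and non-shared vertices are forced. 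A parity observation controls this, namely that a set and the set obtained by toggling a single element cannot both be perfectly matchable, having opposite parities. Consequently, if $|X \cap \{v,w\}| \le 1$ the two candidate assignments of the lone shared vertex produce sets of opposite parity, exactly one is perfectly matchable, and the fiber is a singleton. Hence overcounting occurs only when $v,w \in X$.

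\noindent When $v,w \in X$, set $X_\circ = X \setminus \{v,w\}$ and split on the parity of $X_\circ \cap V(G)$. If it is even, only the two assignments placing both of $v,w$ on the same side have correct parity, and both are valid exactly when $X_\circ \cap V(G)$ and $(X_\circ \cap V(G)) \cup \{v,w\}$ lie in $\PM(G)$ while $X_\circ \cap V(P)$ and $(X_\circ \cap V(P)) \cup \{v,w\}$ lie in $\PM(P)$; this is condition set (a)--(c), with representative $(S,T) = ((X_\circ \cap V(G)) \cup \{v,w\},\, X_\circ \cap V(P))$. If the parity is odd, only the two assignments splitting $v$ and $w$ across the sides survive, both valid exactly under the primed conditions, with representative $(S,T) = ((X_\circ \cap V(G)) \cup \{v\},\, (X_\circ \cap V(P)) \cup \{w\})$. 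In each case the fiber has size at most two, so each $X$ is overcounted by at most one. The listed representative is unique for such $X$: the other valid assignment fails clause (a) or (a$'$) and so is not also recorded, and the two condition sets are mutually exclusive because (a) forces $w \in S$ while (a$'$) forces $w \notin S$. Thus the correction sum equals $\sum_{X \in \PM(G')} (\#\phi^{-1}(X) - 1)\, z^{|X|/2}$, and subtracting it from the four leading terms yields $\sum_{X \in \PM(G')} z^{|X|/2} = p(G';z)$.

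\noindent The main obstacle is the fiber bookkeeping: verifying the overcount is always at most one, and that the two condition sets select exactly one representative pair per overcounted $X$ without double counting. The apparent asymmetry of the primed conditions is only cosmetic, the $v \leftrightarrow w$ swap being absorbed by fixing $v \in S$, $w \in T$ as the canonical representative. The parity remark is the linchpin that both collapses the one-shared-vertex fibers and caps the two-shared-vertex fibers at two, and getting it stated cleanly is the step I would take most care with.
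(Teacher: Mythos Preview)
Your proposal is correct and follows essentially the same approach as the paper: interpret the four leading terms via inclusion--exclusion as the generating function over disjoint pairs $(S,T)\in\PM(G)\times\PM(P)$, build the union map into $\PM(G')$, and use parity to locate the overcounted sets $X$ (necessarily containing both $v$ and $w$) whose two valid decompositions are exactly the pairs described by conditions (a)--(c) or (a$'$)--(c$'$). Your write-up is in fact more explicit than the paper's, which simply cites the cut-vertex argument for the first terms and sketches the fiber analysis; your careful statement of the parity linchpin and the check that each condition set selects a unique representative are precisely the details the paper leaves to the reader.
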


Before proving this result, we give an example to illustrate.

\begin{example}\label{ex: open ear}
    Consider the graph $G'$ illustrated in Figure~\ref{fig: G'}.
    Let $G$ be the graph induced by $[6]$, let $P$ be the open ear induced by $\{2,7,8,9,6\}$, and let $v=2$ and $w=6$.
    
    \begin{figure}
        \centering
        \includegraphics[scale=0.3]{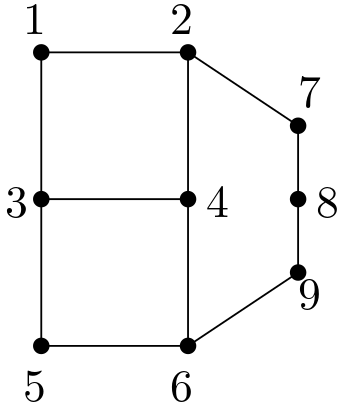}
        \caption{The graph $G'$ used in Example~\ref{ex: open ear} to illustrate Corollary~\ref{cor: attaching open ear}.}\label{fig: G'}
    \end{figure}
    
    Next, we gather the necessary polynomials in the statement of Corollary~\ref{cor: attaching open ear}, which the reader may verify directly:
    \begin{itemize}
        \item $p(G; z) = 1 + 7z + 9z^2 + z^3$;
        \item $p(P; z) = 1 + 4z + 3z^2$;
        \item $p_v(G; z) = p_w(G; z) = 2z + 6z^2+z^3$;
        \item $p_v(P; z) = p_w(P; z) = z+2z^2$;
        \item $p_{v,w}(G; z) = 3z^2+z^3$; and
        \item $p_{v,w}(P; z) = z^2$.
    \end{itemize}
    
    We now need to determine all pairs $(S,T)$ satisfying the specified conditions.
    In this particular example, there are no pairs satisfying (a), (b), and (c).
    For the second set of conditions, we must first have $2 \in S$ and $6 \in T$.
    Since $(S - \{2\}) \cup \{6\}$ must a perfectly matchable set of $G$, $S$ must contain at least one vertex of $G$ that is adjacent to $6$.
    This leads to only four possible choices for $S$: $\{2,4\}$,  $\{1,2,3,4\}$, $\{1,2,3,5\}$ and $\{2,3,4,5\}$. 
    For each of these, replacing $2$ with $6$ will result in another perfectly matchable set of $G$, so these are the four sets $S$ appearing in the final sum.
    The analogous construction for $T$ results in the single perfectly matchable set $\{6,7,8,9\}$.
    Passing to the level of $z^{|S \cup T|/2}$, these four pairs mean we must subtract $z^3 + 3z^4$.
    Applying Corollary~\ref{cor: attaching open ear}, we obtain
    \[
        p(G'; z) = 1+11z+36z^2+37z^3+5z^4.
    \]
\end{example}

\begin{proof}[Proof of Corollary~\ref{cor: attaching open ear}]
%    We use the same notation as in the proof of Theorem~\ref{thm: cut vertex}.
    The terms $p(G; z)p(P; z)$, $-p_v(G; z)p_v(P; z)$, and $-p_w(G; z)p_w(P; z)$ arise exactly as in the proof of Theorem~\ref{thm: cut vertex}.
    The term $p_{v,w}(G; z)p_{v,w}(P; z)$ arises by inclusion-exclusion, that is, we must avoid subtracting summands twice when they correspond to pairs of perfectly matchable sets of $G$ and $P$, both containing both $v$ and $w$.

    If there are any summands $z^{|X|/2}$ of $p(G; z)p(P; z)$ which remain to be subtracted, they each must correspond to a perfectly matchable set $X$ of $G'$ which can be written as two distinct disjoint unions $X = S \uplus T$ and $X = S' \uplus T'$ of perfectly matchable sets in $G$ and $P$, respectively.
    By a parity argument, $v,w \in X$.
    Moreover, $v$ is in exactly one of $S$ and $S'$, and the same is true of $w$ and $T,T'$.
    Specifically, the two possibilities are:
    \begin{itemize}
        \item $v,w \in S$ and $v,w \notin S'$ (or vice versa); or 
        \item $v \in S, T'$ and $w \in S',T$ (or vice versa).
    \end{itemize}
    Again by a parity argument, it is not possible for there to be a perfectly matchable set where $S \uplus T$ falls into the first case and $S' \uplus T'$ falls into the second case.
    
    Since each of these cases corresponds to a summand $z^{|X|/2}$, we must subtract exactly one of them. 
    These are exactly the summands subtracted satisfying (a), (b), and (c) or (a$'$), (b$'$), and (c$'$).
\end{proof}

It follows that Corollaries~\ref{cor: paths and cycles} and~\ref{cor: attaching open ear} can be used in conjunction to produce $p(G; z)$ for any $G$.
Therefore, it is possible to compute $p(G; z)$ for arbitrary graphs through these recurrences.

%%%%
\subsection{In relation to Ehrhart theory}\label{sec: relation to Ehrhart}

Our first result of this section is Lemma~\ref{lem: h star and p of complement}.
Its proof relies on the notion of an edge polytope:
given a graph $G$ on $[n]$, the corresponding \emph{edge polytope} is
\[
    Q_G = \conv\{e_i + e_j \mid ij \in E(G)\}.
\]
The proof also relies on a combination of results in \cite{KalmanPostnikov, OhsugiTsuchiyaInterior}.
We rephrase them slightly in order to avoid discussion of interior polynomials of hypergraphs, which we will not need explicitly.

\begin{thm}[{see \cite[Propositions 3.3 and 3.4]{OhsugiTsuchiyaInterior}}]\label{thm: interior polynomial}
    Suppose $G$ is a bipartite graph with vertex bipartition $(X,Y)$.
    Let $\widehat{G}$ be the graph obtained from $G$ by introducing two new vertices $v$ and $w$, and forming the edges $vy$ for each $y \in Y$ and $xw$ for each $x \in X \cup \{v\}$.
    Then $h^*(Q_{\widehat{G}};z) = p(G;z).$ \qed
\end{thm}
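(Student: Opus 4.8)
The plan is to realize the statement as the composition of two facts about bipartite graphs and K\'alm\'an's interior polynomial, so that most of the work lies in verifying the hypotheses and making the chain of identifications explicit. First I would record that $\widehat{G}$ is a connected bipartite graph with color classes $X \cup \{v\}$ and $Y \cup \{w\}$: every neighbor of $v$ lies in $Y \cup \{w\}$, so $v$ belongs on the $X$-side, and symmetrically every neighbor of $w$ lies in $X \cup \{v\}$, so $w$ belongs on the $Y$-side. The graph is connected (irrespective of the connectivity of $G$, and even if $G$ has isolated vertices) because $v$ is joined to all of $Y$ and to $w$, while $w$ is joined to all of $X$ and to $v$, so the two hubs $v,w$ tie every vertex together. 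This is exactly what is needed to bring $Q_{\widehat{G}}$ into the setting where its $h^*$-polynomial is governed by an interior polynomial.

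Next I would invoke the first cited ingredient. For a connected bipartite graph $H$, the edge polytope $Q_H$ is lattice-isomorphic to the associated root polytope, and by K\'alm\'an--Postnikov \cite{KalmanPostnikov} together with \cite{OhsugiTsuchiyaInterior} its $h^*$-polynomial equals the interior polynomial $I_{\mathcal{H}}$ of the hypergraph $\mathcal{H}$ obtained by declaring one color class to be the vertices and the other to be the hyperedges; the duality theorem for the two interior polynomials guarantees that the choice of side is irrelevant. Applying this with $H = \widehat{G}$ reduces the theorem to the purely combinatorial identity $I_{\mathcal{H}} = p(G; z)$ for the hypergraph $\mathcal{H}$ built from $\widehat{G}$.

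The \emph{crux} is this last identity, and here the engineering of the gadget becomes essential: whichever color class is taken as the hyperedges, one of the added elements becomes a hyperedge incident to the entire vertex set (the hyperedge of $w$ is $X \cup \{v\}$, and the hyperedge of $v$ is $Y \cup \{w\}$). I would unwind the definition of a hypertree---a nonnegative integer degree distribution on the hyperedges realizable by a spanning tree of the incidence graph---and of the internal inactivity statistic, and show that this universal hyperedge forces the hypertrees of $\mathcal{H}$, graded by internal inactivity, to be in generating-function-preserving bijection with the perfectly matchable sets of $G$, graded by half their cardinality. I expect this to be the \emph{main obstacle}: the interior polynomial is defined through an auxiliary linear order whose irrelevance is itself a nontrivial theorem, so a from-scratch argument would have to reproduce the hypertree/activity bijection rather than merely cite it. Since the surrounding text deliberately avoids interior-polynomial vocabulary, I would ultimately present the proof as the composition of \cite[Propositions 3.3 and 3.4]{OhsugiTsuchiyaInterior}, with the connectedness and bipartiteness check of the first step being the only genuinely new (and routine) verification.
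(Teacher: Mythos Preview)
Your proposal is correct and matches the paper's treatment exactly: the paper does not give its own proof of this statement at all, but records it as a rephrasing of \cite[Propositions~3.3 and~3.4]{OhsugiTsuchiyaInterior} together with \cite{KalmanPostnikov}, placing a \qed\ immediately after the statement. Your plan---verify that $\widehat{G}$ is connected bipartite, invoke the K\'alm\'an--Postnikov identification of $h^*(Q_{\widehat{G}};z)$ with an interior polynomial, and then cite Ohsugi--Tsuchiya for the identification of that interior polynomial with $p(G;z)$---is precisely the decomposition implicit in the paper's citation, with the connectedness check being the only additional (and routine) verification you supply.
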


We may now state and prove our first result of this section.

\begin{lem}\label{lem: h star and p of complement}
	Let $G$ be a graph.
	If $\overline{G}$ is bipartite, then $h^*(P_G; z) = p(\overline{G}; z)$.
\end{lem}

\begin{proof}
    Let $X,Y$ be a bipartition of the vertices of $\overline{G}$.
    Form the graph $\widehat{\overline{G}}$ from $\overline{G}$ as in the statement of Theorem~\ref{thm: interior polynomial}, which introduces the new vertices $v$ and $w$.
    
    Consider the edge polytope $Q = Q_{\widehat{\overline{G}}}$.
    Note that $Q$ satisfies the linear equalities
    \[
        \sum_{i \in X \cup \{v\}} x_i = 1 \quad\text{ and }\quad \sum_{i \in Y \cup \{w\}} x_i = 1,
    \]
    from which it follows that $Q$ is lattice-equivalent to $P_G$ via dropping the coordinates corresponding to $v$ and $w$.
    Therefore, $h^*(P_G;z) = h^*(Q;z)$, and by Theorem~\ref{thm: interior polynomial}, we have
    \[
        h^*(P_G;z) = h^*(Q_{\widehat{\overline{G}}};z) = p(\overline{G};z). \qedhere
    \]
\end{proof}

At this point, because we will primarily be considering $h^*$-polynomials of polytopes associated to complements of graphs, we use the following to simplify our notation.

\begin{notation}
    For a graph $G$, set $\h^*(G; z) = h^*(P_{\overline{G}}; z)$.
\end{notation}

Putting Lemma~\ref{lem: no even cycles} together with Proposition~\ref{prop: p(G; z) with no even cycles}, we deduce the following.

\begin{cor}
    If $T$ is a tree and $e = uv$ is any edge, then
    \begin{equation}\label{eq: tree recurrence}
        \pushQED{\qed}
    	\h^*(T; z) = \h^*(T \setminus e; z) + z\h^*(T - \{u,v\}; z). \qedhere\popQED
    \end{equation}
\end{cor}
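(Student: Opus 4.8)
The plan is to derive the recurrence \eqref{eq:tree recurrence} by assembling three facts established earlier in the excerpt, requiring essentially no new work. First I would invoke Lemma~\ref{lem: no even cycles}: since $T$ is a tree it contains no cycles at all, and in particular no even cycles, so $p(T;z) = m(T;z)$. More to the point, every edge $e = uv$ of a tree is contained in no even cycle (indeed in no cycle), so the hypothesis of Theorem~\ref{thm: perf match sets recurrence} is satisfied; alternatively one may appeal directly to the first recurrence in Proposition~\ref{prop: p(G; z) with no even cycles}, which applies verbatim to any graph with no even cycles. Either route yields
\[
    p(T; z) = p(T \setminus e; z) + z\, p(T - \{u,v\}; z).
\]

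Next I would translate this identity into the $\h^*$-notation. By definition, $\h^*(G; z) = h^*(P_{\overline{G}}; z)$, so the target statement is really a statement about the three graphs $T$, $T \setminus e$, and $T - \{u,v\}$ after passing to complements. The key observation is that the complement of a tree (or of a forest, which is what $T \setminus e$ and $T - \{u,v\}$ are) is bipartite precisely when the tree has few enough vertices for its complement to avoid odd cycles. I would therefore need to confirm that Lemma~\ref{lem: h star and p of complement} applies, i.e.\ that $\overline{T}$, $\overline{T \setminus e}$, and $\overline{T - \{u,v\}}$ are all bipartite. This is the one genuine subtlety: the complement of a tree is \emph{not} generally bipartite.

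The main obstacle, then, is the bipartiteness hypothesis. I expect the cleanest resolution is to observe that applying Lemma~\ref{lem: h star and p of complement} requires $\overline{T}$ to be bipartite, and that this is a standing assumption one should read into the corollary's context (the corollary lives in the subsection on complements, where $\overline{G}$ bipartite is the running hypothesis). Under that assumption, $\overline{T}$ is bipartite, and since $\overline{T \setminus e}$ is obtained from $\overline{T}$ by \emph{adding} an edge while $\overline{T - \{u,v\}}$ is an induced subgraph of $\overline{T}$, I would need to check that both remain bipartite. The induced subgraph $\overline{T - \{u,v\}}$ is automatically bipartite since induced subgraphs of bipartite graphs are bipartite. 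For $\overline{T \setminus e}$, note that deleting $e=uv$ from $T$ corresponds to adding the edge $uv$ to $\overline{T}$; this preserves bipartiteness exactly when $u$ and $v$ lie on opposite sides of the bipartition of $\overline{T}$, which I would verify follows from the structure of the situation. Granting these bipartiteness facts, Lemma~\ref{lem: h star and p of complement} gives
\[
    \h^*(T; z) = p(\overline{\overline{T}}; z) = p(T; z), \quad \h^*(T \setminus e; z) = p(T \setminus e; z), \quad \h^*(T - \{u,v\}; z) = p(T - \{u,v\}; z),
\]
and substituting these into the $p$-recurrence above yields \eqref{eq:tree recurrence} immediately. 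The entire proof is thus a two-line combination once the bipartiteness bookkeeping is dispatched, which is why the corollary is stated with a bare \qed.
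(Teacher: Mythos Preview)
Your derivation of the $p$-recurrence is fine, but you have misread the hypothesis of Lemma~\ref{lem: h star and p of complement}, and this creates a spurious obstacle that you then fail to resolve. The lemma says: if $\overline{G}$ is bipartite, then $h^*(P_G;z) = p(\overline{G};z)$. Since $\h^*(T;z) := h^*(P_{\overline{T}};z)$, you apply the lemma with $G = \overline{T}$; the hypothesis is then that $\overline{G} = \overline{\overline{T}} = T$ be bipartite, not that $\overline{T}$ be bipartite. Trees and forests are bipartite, so this is automatic for each of $T$, $T\setminus e$, and $T - \{u,v\}$, and Lemma~\ref{lem: h star and p of complement} immediately gives $\h^*(H;z) = p(H;z)$ for all three graphs $H$. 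Substituting into the $p$-recurrence finishes the proof.

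Your claimed ``genuine subtlety'' that $\overline{T}$ need not be bipartite is therefore irrelevant, and the standing assumption you invent does not exist (and would make the corollary nearly vacuous, since $\overline{T}$ is bipartite only for very small trees). Likewise the bookkeeping about whether adding the edge $uv$ to $\overline{T}$ preserves bipartiteness is unnecessary. The paper's proof is exactly the two-line combination you anticipate at the end, but the reason it works for \emph{every} tree is the one you missed: the bipartiteness condition lands on $T$ itself.
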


We can even obtain an explicit for the $\h^*$-polynomial for even cycles.

\begin{cor}\label{cor: h star for odd cycles}
    For all $k \geq 0$,
    \[
        \h^*(C_{2k}; z) = \frac{(1+2z+\sqrt{1+4z})^k + (1+2z-\sqrt{1+4z})^k}{2^k} - z^k.
    \]
    Consequently, $\NVol(P_{C_{2k}}) = \lfloor \varphi^{2k} \rfloor$ where $\varphi = (1+\sqrt{5})/2.$
\end{cor}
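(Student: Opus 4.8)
The plan is to first reduce the computation of $\h^*(C_{2k};z)$ to the perfectly matchable set polynomial $p(C_{2k};z)$, and then to solve the relevant recurrence in closed form. Since $\h^*(C_{2k};z)=h^*(P_{\overline{C_{2k}}};z)$ and $C_{2k}=\overline{\overline{C_{2k}}}$ is bipartite, Lemma~\ref{lem: h star and p of complement} applied with $\overline{C_{2k}}$ in the role of $G$ gives $h^*(P_{\overline{C_{2k}}};z)=p(C_{2k};z)$. Thus it suffices to establish the claimed closed form for $p(C_{2k};z)$, and here I would invoke the even case of Corollary~\ref{cor: paths and cycles}, namely $p(C_{2k};z)=u_{2k+1}(1,z)+zu_{2k-1}(1,z)-z^{k}$. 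The $-z^{k}$ term matches the $-z^{k}$ in the statement, so the heart of the problem is to show that $u_{2k+1}(1,z)+zu_{2k-1}(1,z)$ equals $\bigl((1+2z+\sqrt{1+4z})^k+(1+2z-\sqrt{1+4z})^k\bigr)/2^k$.

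To do this, I would solve the linear recurrence $u_n(1,z)=u_{n-1}(1,z)+zu_{n-2}(1,z)$ via its characteristic equation $t^2-t-z=0$, whose roots are $\alpha=\tfrac{1+\sqrt{1+4z}}{2}$ and $\beta=\tfrac{1-\sqrt{1+4z}}{2}$, satisfying $\alpha+\beta=1$ and $\alpha\beta=-z$. The initial conditions $u_0=0$, $u_1=1$ then yield the Binet-type formula $u_n(1,z)=(\alpha^n-\beta^n)/(\alpha-\beta)$. The key simplification uses $z=-\alpha\beta$: substituting this into $u_{2k+1}(1,z)+zu_{2k-1}(1,z)$ and collecting terms, one finds $z(\alpha^{2k-1}-\beta^{2k-1})=-\alpha^{2k}\beta+\alpha\beta^{2k}$, so the numerator becomes $(\alpha-\beta)(\alpha^{2k}+\beta^{2k})$, the factor $\alpha-\beta$ cancels, and the expression collapses to $\alpha^{2k}+\beta^{2k}$. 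Squaring the roots gives $\alpha^2=\tfrac{1+2z+\sqrt{1+4z}}{2}$ and $\beta^2=\tfrac{1+2z-\sqrt{1+4z}}{2}$, whence $\alpha^{2k}+\beta^{2k}=\bigl((1+2z+\sqrt{1+4z})^k+(1+2z-\sqrt{1+4z})^k\bigr)/2^k$, as required.

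For the normalized volume statement, I would evaluate at $z=1$, using $\NVol(P)=h^*(P;1)$ applied to $P=P_{\overline{C_{2k}}}$, so that the normalized volume equals $\h^*(C_{2k};1)$. Setting $z=1$ gives $\alpha^2=\tfrac{3+\sqrt5}{2}=\varphi^2$ and $\beta^2=\tfrac{3-\sqrt5}{2}=\psi^2$, where $\psi=\tfrac{1-\sqrt5}{2}$ is the conjugate of $\varphi$. Hence $\h^*(C_{2k};1)=\varphi^{2k}+\psi^{2k}-1$, which is the Lucas number $L_{2k}$ minus one. Because $|\psi|<1$ we have $0<\psi^{2k}<1$ for $k\geq 1$, and $\varphi^{2k}=L_{2k}-\psi^{2k}$ then forces $\lfloor\varphi^{2k}\rfloor=L_{2k}-1$, identifying the normalized volume with $\lfloor\varphi^{2k}\rfloor$.

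I expect the main obstacle to be the algebraic simplification in the second paragraph, specifically verifying cleanly that the substitution $z=-\alpha\beta$ turns $u_{2k+1}(1,z)+zu_{2k-1}(1,z)$ into the symmetric power sum $\alpha^{2k}+\beta^{2k}$ with the $\alpha-\beta$ denominators canceling exactly. Once this cancellation is secured, the remainder is routine bookkeeping: matching the $-z^{k}$ terms and performing the final floor estimate via $0<\psi^{2k}<1$.
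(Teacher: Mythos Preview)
Your argument is correct. Both you and the paper start from Lemma~\ref{lem: h star and p of complement} (implicitly in the paper's case) to identify $\h^*(C_{2k};z)$ with $p(C_{2k};z)$, and both appeal to Corollary~\ref{cor: paths and cycles}. The difference is in how the closed form is extracted. The paper first derives an inhomogeneous second-order recurrence $p(C_{2k};z)=(1+2z)p(C_{2k-2};z)-z^2p(C_{2k-4};z)+z^{k-1}$, packages it into a generating function in an auxiliary variable $t$, and then extracts the coefficients. You instead work directly with the Binet formula $u_n(1,z)=(\alpha^n-\beta^n)/(\alpha-\beta)$ and use the identity $z=-\alpha\beta$ to collapse $u_{2k+1}+zu_{2k-1}$ to the symmetric power sum $\alpha^{2k}+\beta^{2k}$; recognizing $\alpha^2$ and $\beta^2$ then finishes the job. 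Your route is more elementary and avoids the inhomogeneous term entirely, while the paper's generating-function approach has the side benefit of giving a single rational function encoding all of the $p(C_{2k};z)$ simultaneously. Your treatment of the volume statement via $L_{2k}=\varphi^{2k}+\psi^{2k}$ and the bound $0<\psi^{2k}<1$ for $k\geq 1$ is also more explicit than the paper's, which simply asserts the floor identity.
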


\begin{proof}
    By Corollary~\ref{cor: paths and cycles} we deduce that $p(C_{2k}; z)$ satisfies the recurrence
    \[
        p(C_{2k}; z) = (1+2z)p(C_{2k-2}; z) - z^2p(C_{2k-4}; z) + z^{k-1}
    \]
    with initial conditions $p(C_1; z) = 1$ and $p(C_3; z) = 1+z$.
    Thus, the generating function, with respect to a new variable $t$, is
    \[
        \frac{1 - 2zt + (z+z^2)t^2}{(1-(1+2z)t + z^2t^2)(1-zt)}.
    \]
    The given formula for $\h^*(C_{2k}; z)$ may then be extracted from this rational function using elementary algebra, and it follows that $\NVol(P_{C_{2k}}) = \h^*(C_{2k};1) = \lfloor \varphi^{2k} \rfloor$.
\end{proof}

%\todo{Compare with http://oeis.org/A111125. We can add to this entry.}

Corollary~\ref{cor: h star for odd cycles} genuinely does require the cycle to be even: see Table~\ref{tab: h star for cycles} for $\h^*(C_n; z)$ for all $3 \leq n \leq 8$, and compare with the second column of Table~\ref{tab: paths and cycles}.
Indeed, when $n=5$, for example, $\h^*(C_5; z) = 1+5z+5z^2+z^3$ cannot be a perfectly matchable set polynomial for any graph with five vertices, although it is the perfectly matchable set polynomial for the tree on six vertices with degree sequence $(3,2,2,1,1,1)$. 
That said, it appears that, when $n \geq 5$ is odd, $\h^*(C_n; z) = p(C_n; z) + z^{(n+1)/2}$.
This allows us to state the following conjecture.

\begin{table}
    \centering
    \begin{tabular}{|c|l|} \hline
        $n$ & $\h^*(C_n; z)$ \\ \hline
        $3$ & $1 + 4z + z^2$ \\
        $4$ & $1 + 4z + z^2$ \\
        $5$ & $1 + 5z + 5z^2  + z^3$ \\
        $6$ & $1 + 6z + 9z^2  + z^3$ \\
        $7$ & $1 + 7z + 14z^2 + 7z^3  + z^4$ \\
        $8$ & $1 + 8z + 20z^2 + 16z^3 + z^4$ \\
        $9$ & $1 + 9z + 27z^2 + 30z^3 + 9z^4 + z^5$ \\ \hline
    \end{tabular}
    \caption{The polynomials $\h^*(C_n; z)$ for all $3 \leq n \leq 9$.}
    \label{tab: h star for cycles}
\end{table}

\begin{conj}
    For all $n \geq 4$,
    \[
        \h^*(C_n; z) = p(C_n; z) + \frac{1+(-1)^{n+1}}{2}z^{(n+1)/2}.
    \]
\end{conj}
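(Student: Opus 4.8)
The conjecture splits by the parity of $n$. For even $n$ the cycle $C_n$ is bipartite, so Lemma~\ref{lem: h star and p of complement} applied to $\overline{C_n}$ gives $\h^*(C_n;z) = h^*(P_{\overline{C_n}};z) = p(C_n;z)$, which is exactly the claim since the coefficient $\tfrac{1+(-1)^{n+1}}{2}$ vanishes. The entire content is therefore the odd case: for odd $n \geq 5$ one must show $\h^*(C_n;z) = p(C_n;z) + z^{(n+1)/2}$. It is convenient to first record a closed form for the right-hand side. Since an odd cycle contains no even cycle, Lemma~\ref{lem: no even cycles} gives $p(C_n;z) = m(C_n;z)$, and solving the Fibonacci-type recurrence of Corollary~\ref{cor: paths and cycles} (as in the proof of Corollary~\ref{cor: h star for odd cycles}) yields the Lucas-polynomial form $p(C_n;z) = \alpha^n + \beta^n$, where $\alpha,\beta = \tfrac{1}{2}(1 \pm \sqrt{1+4z})$ are the roots of $t^2 - t - z$. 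Thus the target is the compact identity $\h^*(C_n;z) = \alpha^n + \beta^n + z^{(n+1)/2}$, which unifies with the even case as $\h^*(C_n;z) = m(C_n;z) + (-1)^{n+1}z^{\lceil n/2\rceil}$.

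The plan is to compute $h^*(P_{\overline{C_n}};z)$ directly, since for odd $n \geq 5$ the graph $\overline{C_n}$ is imperfect and neither Theorem~\ref{thm:Lovasz} nor the edge-polytope route of Lemma~\ref{lem: h star and p of complement} applies. The starting point is the explicit vertex description: for $n \geq 4$ the cliques of $C_n$ are exactly $\emptyset$, the singletons, and the edges, so the stable sets of $\overline{C_n}$ are these same sets and
\[
    P_{\overline{C_n}} = \conv\bigl(\{0\} \cup \{e_i : i \in \ZZ_n\} \cup \{e_i + e_{i+1} : i \in \ZZ_n\}\bigr).
\]
I would exploit the cyclic $\ZZ_n$-symmetry of this polytope by attempting a half-open regular unimodular triangulation whose simplices are indexed by local data along the cycle, and then assemble its $h^*$-polynomial through a transfer-matrix computation around the cycle. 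The matching polynomial $m(C_n;z) = \alpha^n + \beta^n$ is itself the trace $\operatorname{tr}(T^n)$ of a $2\times 2$ transfer matrix with eigenvalues $\alpha,\beta$; the expectation is that the same local-to-global bookkeeping reproduces $\alpha^n + \beta^n$ as the ``bulk'' contribution, with the cyclic closure forcing a single additional boundary term.

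The crux---and the place I expect the real difficulty---is pinning down that wrap-around correction and showing it is exactly $+z^{(n+1)/2}$ for odd $n$ (and $-z^{n/2}$ for even $n$). Two features make this plausible and suggest how to isolate it. First, the degree of $\h^*$ is $(n+1)/2$ while $\dim P_{\overline{C_n}} = n$, so the codegree equals $(n+1)/2$; the top coefficient being $1$ is equivalent, by Ehrhart--Macdonald reciprocity, to the statement that the dilate $\tfrac{n+1}{2}P_{\overline{C_n}}$ has a unique interior lattice point. I would try to identify this point explicitly (the natural candidate is the cyclically symmetric center) and prove uniqueness, which would nail the coefficient of $z^{(n+1)/2}$ independently of the bulk computation. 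Second, the parity dependence $(-1)^{n+1}$ mirrors the even-cycle phenomenon from the proof of Corollary~\ref{cor: h star for odd cycles}, where the two perfect matchings of an even cycle collapsed to a single vertex set and produced the $-z^{n/2}$ defect; the odd analogue should arise from the near-perfect matching of $C_n$ wrapping around. The main obstacle is thus entirely in the non-perfect, cyclic geometry: constructing the decomposition compatibly with the $\ZZ_n$-action and controlling the single simplex (or lattice point) responsible for the top term.

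Finally, I would keep in view an alternative route hinted at by the data: the remark that $\h^*(C_5;z)$ equals $p(T;z)$ for the tree $T$ on six vertices with degree sequence $(3,2,2,1,1,1)$ suggests that for each odd $n$ there is a tree $T_n$ on $n+1$ vertices with a perfect matching satisfying $\h^*(C_n;z) = p(T_n;z)$. If one can produce such $T_n$ together with a lattice equivalence $P_{\overline{C_n}} \cong Q_{\widehat{T_n}}$, then Theorem~\ref{thm: interior polynomial} would give $\h^*(C_n;z) = p(T_n;z)$ for free, and it would remain only to prove the purely combinatorial identity $p(T_n;z) = p(C_n;z) + z^{(n+1)/2}$ using the recurrences of Theorem~\ref{thm: perf match sets recurrence} and the ear-attachment formula of Corollary~\ref{cor: attaching open ear}. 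This second step is squarely within the methods of Section~\ref{sec: recurrences}; the bottleneck is again the first step, namely exhibiting the lattice equivalence for an imperfect graph, for which I have no a priori guarantee beyond the $n=5$ evidence.
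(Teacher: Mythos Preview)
The statement you are attempting to prove is presented in the paper as a \emph{conjecture}, not a theorem; the paper offers no proof and only motivates it via the small-$n$ data in Table~\ref{tab: h star for cycles}. So there is nothing to compare your argument against.

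That said, your proposal is not a proof either. You correctly dispose of the even case via Lemma~\ref{lem: h star and p of complement}, and this part is already implicit in the paper (it is precisely how Corollary~\ref{cor: h star for odd cycles} is obtained). But for odd $n$ you outline two possible strategies---a transfer-matrix/triangulation computation of $h^*(P_{\overline{C_n}};z)$, and a hoped-for lattice equivalence to some $Q_{\widehat{T_n}}$---and in both cases you explicitly flag the crucial step as unproven (``the main obstacle is \dots'', ``for which I have no a priori guarantee beyond the $n=5$ evidence''). Neither the existence of a $\ZZ_n$-compatible regular unimodular triangulation of $P_{\overline{C_n}}$ nor the lattice equivalence with an edge polytope of a bipartite graph is established, and for an imperfect graph there is no machinery in the paper that supplies either. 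Your heuristic that the top coefficient should be governed by a unique interior lattice point in the $\tfrac{n+1}{2}$-th dilate is reasonable but also left unverified.

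In short: the even case is settled by the paper's own results and you reproduce that argument; the odd case is genuinely open in the paper, and your proposal is a research plan rather than a proof.
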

%%%%%%%%%%%%%%%%%%%%%%%%%%%%%%%%%%%%%%
%%%%%%%%%%%%%%%%%%%%%%%%%%%%%%%%%%%%%%%%%%
%%%%%%%%%%%%%%%%%%%%%%%%%%%%%%%%%%%%%%%%%%

\section{Special cases for trees}\label{sec: special cases}

In this section, we conduct a more specific analysis of $\h^*(T; z)$ when $T$ is a tree.
Recall from Corollary~\ref{cor: paths and cycles} and Lemma~\ref{lem: no even cycles} that $\h^*(P_n; z) = u_{n+1}(1,z)$, in which case $\NVol(P_{\overline{P}_n}) = F_{n+1}$, the $(n+1)^{th}$ Fibonacci number. 
We explore this and similar phenomena further for various classes of trees.

Recall that a \emph{caterpillar} is a tree consisting of a path, called the \emph{spine}, and a set of leaves that are each adjacent to a vertex of the spine.
We denote by $\cat(n,k)$ the caterpillar whose spine has $n$ vertices and every spine vertex is adjacent to exactly $k$ leaves.
See Figure~\ref{fig: caterpillar} for an illustration of $\cat(4,3)$.
    
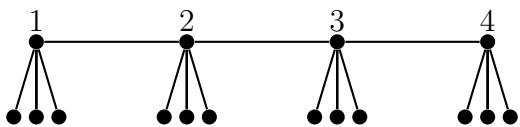
\begin{figure}
\begin{center}
\begin{tikzpicture}
\begin{scope}[every node/.style={circle,fill,inner sep=0pt,minimum size=2mm}]
	\node[label=$1$] (A) at (-3,0) {};
	\node[label=$2$] (B) at (-1,0) {};
	\node[label=$3$] (C) at (1,0) {};
	\node[label=$4$] (D) at (3,0) {};
	
	\node (A1) at (-3.3,-1) {};
	\node (A2) at (-3,-1) {};
	\node (A3) at (-2.7,-1) {};
	
	\node (B1) at (-1.3,-1) {};
	\node (B2) at (-1,-1) {};
	\node (B3) at (-0.7,-1) {};
	
	\node (C1) at (0.7,-1) {};
	\node (C2) at (1,-1) {};
	\node (C3) at (1.3,-1) {};
	
	\node (D1) at (2.7,-1) {};
	\node (D2) at (3,-1) {};
	\node (D3) at (3.3,-1) {};
\end{scope}

\draw[thick] (A) -- (B) -- (C) -- (D);
\draw[thick] (A1) -- (A) -- (A2) -- (A) -- (A3);
\draw[thick] (B1) -- (B) -- (B2) -- (B) -- (B3);
\draw[thick] (C1) -- (C) -- (C2) -- (C) -- (C3);
\draw[thick] (D1) -- (D) -- (D2) -- (D) -- (D3);

\end{tikzpicture}
\end{center}
\caption{The caterpillar $\cat(4,3)$. The spine consists of the graph induced by the vertex set $\{1,2,3,4\}$.}\label{fig: caterpillar}
\end{figure}

\begin{prop}\label{prop: caterpillars}
    For all $n,k \geq 0$, $\h^*(\cat(n,k); z)$ is given by
    \[
         \frac{(1+kz+\sqrt{1+2(2+k)z+k^2z^2})^{n+1} - (1+kz-\sqrt{1+2(2+k)z+k^2z^2})^{n+1}}{2^{n+1}\sqrt{1+2(2+k)z+k^2z^2}}.
	\]
\end{prop}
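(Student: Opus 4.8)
The plan is to reduce the statement to a computation of a perfectly matchable set polynomial and then solve a two-term linear recurrence. Since $\cat(n,k)$ is a tree, it is bipartite, so Lemma~\ref{lem: h star and p of complement} applied to $G = \overline{\cat(n,k)}$ (for which $\overline{G} = \cat(n,k)$ is bipartite) gives $\h^*(\cat(n,k); z) = p(\cat(n,k); z)$. Thus it suffices to find a closed form for $c_n := p(\cat(n,k); z)$, with $k$ held fixed.

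First I would establish the recurrence
\[
    c_n = (1+kz)\,c_{n-1} + z\,c_{n-2} \qquad (n \geq 2),
\]
with base cases $c_0 = 1$ (the empty spine gives the empty graph) and $c_1 = p(K_{1,k}; z) = 1 + kz$ (the star has only the empty set and its $k$ edges as perfectly matchable sets). To get the recurrence, apply the vertex recurrence of Proposition~\ref{prop: p(G; z) with no even cycles} at a terminal spine vertex $u$, whose $k+1$ neighbors are the $k$ leaves hanging off $u$ together with the adjacent spine vertex. Deleting $u$, or deleting $u$ together with one of its leaves, leaves a copy of $\cat(n-1,k)$ plus some isolated former leaves; since isolated vertices are invisible to $p$ (they contribute a factor of $1$), each of these $1+k$ terms equals $c_{n-1}$. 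Deleting $u$ together with its spine neighbor leaves $\cat(n-2,k)$ plus isolated leaves, contributing $c_{n-2}$. Collecting terms gives the displayed relation.

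Next I would solve the recurrence. Its characteristic equation is $\lambda^2 - (1+kz)\lambda - z = 0$, whose discriminant simplifies to $(1+kz)^2 + 4z = 1 + 2(2+k)z + k^2 z^2 =: D^2$, so the roots are $\lambda_{\pm} = \tfrac{1}{2}\bigl( (1+kz) \pm D \bigr)$. The general solution has the form $c_n = A\lambda_+^n + B\lambda_-^n$; matching $c_0 = 1$ and $c_1 = 1+kz$ (equivalently, recognizing $c_n = u_{n+1}(1+kz, z)$ in the notation of the Hoggatt--Long polynomials, since both sequences obey the same recurrence and initial data) yields the Binet form $c_n = (\lambda_+^{n+1} - \lambda_-^{n+1})/(\lambda_+ - \lambda_-)$. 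Substituting $2\lambda_{\pm} = (1+kz) \pm D$ and $\lambda_+ - \lambda_- = D$ then produces exactly the expression in the statement. One could alternatively package this as a generating function in an auxiliary variable $t$ and extract coefficients, as in the proof of Corollary~\ref{cor: h star for odd cycles}, but the characteristic-root approach gives the explicit $(n+1)$-th power form most directly.

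The only genuinely delicate point is the bookkeeping in the recurrence step: one must track that each vertex deletion produces a smaller caterpillar together with some isolated vertices, and use that such isolated vertices multiply $p$ by $1$. Everything after that is a routine constant-coefficient linear recurrence whose solution is a direct check. As a sanity test, setting $k = 0$ recovers $D = \sqrt{1+4z}$ and $c_n = u_{n+1}(1,z) = p(P_n; z)$, consistent with Corollary~\ref{cor: paths and cycles}.
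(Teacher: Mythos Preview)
Your proof is correct and follows essentially the same route as the paper: both derive the linear recurrence $c_n = (1+kz)c_{n-1} + zc_{n-2}$ with initial values $1$ and $1+kz$ and then solve it. The only cosmetic differences are that the paper applies the edge-deletion recurrence to a terminal spine edge (you use the vertex form at a terminal spine vertex) and the paper packages the solution via the generating function $1/(1-(1+kz)t-zt^2)$ rather than the characteristic-root Binet form you wrote down.
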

    
\begin{proof}
    Applying Theorem~\ref{thm: perf match sets recurrence} on one of the last edges of the spine, we have 
    \[
	    \h^*(\cat(n,k); z) = (1+kz)\h^*(\cat(n-1,k); z) + z\h^*(\cat(n-2,k); z)
	\]
	with initial conditions $1$ and $1+kz$ for $n=0$ and $n=1$, respectively.
	From this recurrence we obtain the generating function
	\[
	    \frac{1}{1-(1+kz)t-zt^2}
	\]
	for $\h^*(\cat(n,k))$, and we extract the exact formula using elementary algebra.
\end{proof}

Note that we recover $\NVol(P_{\overline{P}_n}) = F_{n+1}$ by setting $k=0$ and $z=1$ in Proposition~\ref{prop: caterpillars}, but the proposition gives us more refined information.
In fact, Proposition~\ref{prop: caterpillars} leads to a simple asymptotic formula for the normalized volume of $P_{\overline{\cat{n,k}}}$ as the number of leaves increases.
    
\begin{cor}
    For a fixed positive integer $n$, $\NVol(P_{\overline{\cat(n,k)}}) \to k^n$ as $k \to \infty$. \qed
\end{cor}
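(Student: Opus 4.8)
The plan is to evaluate the closed form of Proposition~\ref{prop: caterpillars} at $z = 1$ and extract its leading asymptotic behavior, using the fact that $\NVol(P) = h^*(P;1)$ for any lattice polytope together with the notation $\h^*(G;z) = h^*(P_{\overline G};z)$. This gives
\[
    \NVol(P_{\overline{\cat(n,k)}}) = \h^*(\cat(n,k); 1).
\]
First I would substitute $z=1$ into the formula of Proposition~\ref{prop: caterpillars}, simplifying the expression under the radical to
\[
    1 + 2(2+k) + k^2 = (k+1)^2 + 4.
\]
Writing $\alpha_k = (k+1) + \sqrt{(k+1)^2+4}$ and $\beta_k = (k+1) - \sqrt{(k+1)^2+4}$, the normalized volume becomes
\[
    \NVol(P_{\overline{\cat(n,k)}}) = \frac{\alpha_k^{\,n+1} - \beta_k^{\,n+1}}{2^{n+1}\sqrt{(k+1)^2+4}}.
\]

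Next I would record two elementary facts. Since $\alpha_k\beta_k = (k+1)^2 - \left((k+1)^2 + 4\right) = -4$ is constant while $\alpha_k \to \infty$, we have $\beta_k = -4/\alpha_k \to 0$, so the term $\beta_k^{\,n+1}$ contributes nothing in the limit. For the dominant term, note that $\sqrt{(k+1)^2+4} \sim k$ and hence $\alpha_k \sim 2k$ as $k \to \infty$. Combining these,
\[
    \frac{\alpha_k^{\,n+1}}{2^{n+1}\sqrt{(k+1)^2+4}} \sim \frac{(2k)^{n+1}}{2^{n+1}\,k} = k^n.
\]
Dividing the displayed formula by $k^n$ and passing to the limit then yields $\NVol(P_{\overline{\cat(n,k)}})/k^n \to 1$, which is the claimed asymptotic equivalence.

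There is essentially no hard step here: once the substitution $z=1$ is made and the discriminant is identified as $(k+1)^2+4$, the conclusion follows from the standard large-$k$ expansions $\sqrt{(k+1)^2+4} = k + O(1)$ and $\alpha_k = 2k + O(1)$. The one point deserving care is the interpretation of the statement ``$\to k^n$,'' which I read as the asymptotic equivalence $\NVol \sim k^n$ (equivalently, the ratio tends to $1$); I would note explicitly that $\beta_k^{\,n+1}$ vanishes so that it is clear only the principal root $\alpha_k$ governs the growth.
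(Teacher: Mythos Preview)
Your argument is correct. The paper itself gives no proof of this corollary beyond the \qed, treating it as immediate from Proposition~\ref{prop: caterpillars}; your substitution $z=1$, identification of the radicand as $(k+1)^2+4$, and extraction of the leading asymptotic $\alpha_k^{n+1}/\bigl(2^{n+1}\sqrt{(k+1)^2+4}\bigr)\sim k^n$ is exactly the computation one would expect to fill in that gap. An equally short alternative, which you might mention, is to observe directly from the recurrence $\h^*(\cat(n,k);1)=(1+k)\h^*(\cat(n-1,k);1)+\h^*(\cat(n-2,k);1)$ with initial values $1$ and $1+k$ that $\h^*(\cat(n,k);1)$ is a monic polynomial of degree $n$ in $k$, which gives the same conclusion without touching the closed form.
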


It is worth stating here in detail how the $\h^*$-polynomials in Proposition~\ref{prop: caterpillars} relate to existing generalizations of Fibonacci numbers.
Recall from Section~\ref{sec: recurrences} the functions $u_n(x,y)$, defined for all $n \geq 0$ with initial conditions $u_0(x,y) = 0$ and $u_1(x,y) = 1$, and
\[
    u_n(x,y) = xu_{n-1}(x,y) + yu_{n-2}(x,y)
\]
for $n \geq 2$.
The most common definition of a Fibonacci polynomial is the polynomial $F_n(x) = u_n(x,1)$, which satisfies the recurrence
\[
        F_n(x) = xF_{n-1}(x) + F_{n-2}(x)
\]
for $n \geq 2$.
Proposition~\ref{prop: caterpillars}, together with Corollary~\ref{cor: paths and cycles}, shows that $\h^*(P_n; z) = u_{n+1}(1,z)$, yields a different polynomial which specializes to a Fibonacci number when setting $z=1$.
These two polynomials are related via $F_n(x) = \h^*(P_{n-1};x^2)$ when $n$ is odd and $F_n(x) = x\h^*(P_{n-1};x^2)$ when $n$ is even, but only one appears in our discussion as an $h^*$-polynomial for a graph.
The polynomials $u_n(1,z)$ are also sometimes called \emph{Jacobsthal polynomials} e.g. in \cite[Chapter 39]{Koshy} and \cite{Hoggatt-BicknellJohnson} and the references therein, although it has become more common to define Jacobsthal polynomials as $u_n(1,2z)$.

Caterpillars themselves hold yet more information.
For example, when considering $\cat(n,1)$, we obtain a polynomial analogue of the \emph{Pell numbers}: the sequence obtained from initial terms $1$ and $3$ and following the same recurrence as the Fibonacci numbers.
    
\begin{cor}\label{cor: pell}
    For all $n \geq 2$, 
    \[
        \h^*(\cat(n,1); z) = (1+z)\h^*(\cat(n-1,1); z) + z\h^*(\cat(n-2,1); z),
	\]
    with the initial conditions $\h^*(\cat(0,1); z) = 1$ and $\h^*(\cat(1,1); z) = 1+z$. \qed
\end{cor}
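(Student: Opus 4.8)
The plan is to observe that this corollary is essentially the specialization $k=1$ of the recurrence and initial data already established inside the proof of Proposition~\ref{prop: caterpillars}, so almost no new work is required. In that proof, applying Theorem~\ref{thm: perf match sets recurrence} to one of the last edges of the spine yields
\[
    \h^*(\cat(n,k); z) = (1+kz)\h^*(\cat(n-1,k); z) + z\h^*(\cat(n-2,k); z),
\]
and substituting $k=1$ gives exactly the claimed three-term recurrence. Likewise, the initial conditions $1$ and $1+kz$ for $n=0$ and $n=1$ specialize to $\h^*(\cat(0,1); z) = 1$ and $\h^*(\cat(1,1); z) = 1+z$.

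If instead I wanted a self-contained derivation rather than a back-reference, I would argue directly at the level of $p$ and then invoke Lemma~\ref{lem: h star and p of complement} to transfer the result to $\h^*$. That transfer is legitimate because $\cat(n,1)$ is a tree, hence bipartite, so $\h^*(\cat(n,1); z) = h^*(P_{\overline{\cat(n,1)}}; z) = p(\cat(n,1); z)$. Let $e = s_{n-1}s_n$ be the last edge of the spine; since $\cat(n,1)$ is a tree it contains no even cycle, so Theorem~\ref{thm: perf match sets recurrence} applies to $e$. Deleting $e$ splits the graph into $\cat(n-1,1)$ together with the pendant edge $s_n\ell_n \cong P_2$, whose perfectly matchable set polynomial is $1+z$; by multiplicativity of $p$ over disjoint unions this contributes $(1+z)\,p(\cat(n-1,1); z)$. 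Deleting both endpoints $s_{n-1}$ and $s_n$ leaves $\cat(n-2,1)$ together with the now-orphaned leaves $\ell_{n-1}$ and $\ell_n$, contributing $z\,p(\cat(n-2,1); z)$.

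The only point requiring genuine care — and the closest thing to an obstacle — is the handling of those orphaned leaves: after removing the two spine endpoints, $\ell_{n-1}$ and $\ell_n$ become isolated vertices, and one must note that an isolated vertex can never be matched and therefore leaves the perfectly matchable set polynomial unchanged. This is what justifies identifying the contracted graph $\cat(n,1) - \{s_{n-1},s_n\}$ with $\cat(n-2,1)$ for the purpose of computing $p$. With that observation in hand, the base cases are immediate: $\cat(0,1)$ is the empty graph with $p \equiv 1$, and $\cat(1,1)$ is a single edge $P_2$ with $p(\cat(1,1); z) = 1+z$. Combining the recurrence with these base cases completes the argument.
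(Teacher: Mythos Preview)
Your proposal is correct and matches the paper's approach: the corollary carries a \qed with no explicit argument, so the paper treats it as an immediate specialization of the recurrence and initial data from Proposition~\ref{prop: caterpillars} at $k=1$, precisely as in your first paragraph. Your optional self-contained derivation via Theorem~\ref{thm: perf match sets recurrence} and Lemma~\ref{lem: h star and p of complement} is also sound (including the handling of the orphaned leaves), but it goes beyond what the paper records.
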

	
See Table~\ref{tab: pell} for $\h^*(\cat(n,1); z)$ for small values of $n$.
Such polynomial analogues of Pell numbers have previously been studied; see \cite[Sequence A037027]{OEIS} for connections involving lattice paths, pattern avoidance in ternary words, and more. 
	
\begin{table}
    \begin{center}
    \begin{tabular}{|c|l|} \hline
        $n$ & $\h^*(\cat(n,1); z)$ \\ \hline
        $0$ & $1$  \\
        $1$ & $1+z$ \\
        $2$ & $1+3z+z^2$ \\
        $3$ & $1+5z+5z^2+z^3$ \\
        $4$ & $1+7z+13z^2+7z^3+z^4$ \\
        $5$ & $1+9z+25z^2+25z^3+9z^4+z^5$ \\ 
        $6$ & $1+11z+41z^2+63z^3+41z^4+11z^5+z^6$ \\
        $7$ & $1+13z+61z^2+129z^3+129z^4+61z^5+13z^6+z^7$ \\ \hline
    \end{tabular}\vspace{1mm}
    \end{center}
    \caption{The $h^*$-polynomials $\h^*(\cat(n,1); z)$ for $n \leq 7$. 
    Each of these is a polynomial analogue of a Pell number.}\label{tab: pell}
\end{table}

Countless other sequences may be given polynomial analogues as $h^*$-polynomials of lattice polytopes, now, due to the recurrence \eqref{eq: tree recurrence}. 
We close this section by highlighting one more special case: when $T$ is a complete $k$-ary tree of rank $r$.
Recall that a \emph{complete $k$-ary tree of rank $r$} is a rooted tree in which every internal vertex has $k$ descendants, and every path from the root to a leaf has $r+1$ vertices. 
For $k, r \geq 0$, we denote by $\Lambda_{k,r}$ the complete $k$-ary tree of rank $r$.

\begin{prop}
    For all $k, r \geq 0$,
	\[
	    \h^*(\Lambda_{k,r}; z) = \h^*(P_{r+1};kz)\prod_{i=1}^{r-1} \h^*(\Lambda_{k,i}; z)^{k-1}.
	\]
	Consequently, $h^*(\Lambda_{k,r}; z)$ may be expressed as a product of the polynomials $\h^*(P_n; \ell z)$ for certain choices of $n$ and $\ell$.
\end{prop}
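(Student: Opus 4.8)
The plan is to use a cut-vertex decomposition of the complete $k$-ary tree $\Lambda_{k,r}$, applying Theorem~\ref{thm: cut vertex} at the root. The root $\rho$ of $\Lambda_{k,r}$ has exactly $k$ descendants, each of which is the root of a copy of $\Lambda_{k,r-1}$. However, the root is not directly a cut-vertex shared by simple pieces; the more convenient decomposition is to view $\Lambda_{k,r}$ as built from the root together with $k$ edges going down, each edge leading into a subtree. I would first set up notation: let $v$ be the root, and observe that the recurrence \eqref{eq: tree recurrence} (equivalently Proposition~\ref{prop: p(G; z) with no even cycles}) applies at any edge incident to the root since a tree has no even cycles.

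\medskip

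\noindent\textbf{Main decomposition.} First I would apply Theorem~\ref{thm: cut vertex} iteratively, treating the root as a cut-vertex that glues together the $k$ pendant subtrees. Since $p_v$ for a graph where $v$ is a cut-vertex factors nicely, the key computation is to track how $\h^*$ and the auxiliary polynomial $\overline{h}^*_v$ (the complement-analogue of $p_v$) behave. Because each of the $k$ branches at the root is an identical copy of $\Lambda_{k,r-1}$ rooted at a descendant $w$, the gluing formula
\[
    p(G'; z) = p(G; z)p(H; z) - p_u(G; z)p_w(H; z)
\]
can be applied $k-1$ times to combine the branches one at a time. This is where the factor $\h^*(\Lambda_{k,i}; z)^{k-1}$ should emerge: the $k$-fold symmetry at each internal node contributes $k-1$ copies of the lower-rank polynomial as a correction, while the path factor $\h^*(P_{r+1}; kz)$ tracks the single root-to-leaf ``spine'' with the $k$ multiplicities absorbed into the substitution $z \mapsto kz$.

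\medskip

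\noindent\textbf{Induction on rank.} I would then prove the formula by induction on $r$. The base cases $r=0$ (a single vertex, $\h^*=1$) and $r=1$ (a star $\cat(1,k)$, giving $\h^* = 1 + kz = \h^*(P_2; kz)$) anchor the induction and match the claimed product with an empty product. For the inductive step, I would express $\h^*(\Lambda_{k,r})$ via the cut-vertex recurrence in terms of $\h^*(\Lambda_{k,r-1})$ and the auxiliary $p_v$-type polynomials, then substitute the inductive hypothesis for the rank-$(r-1)$ factors. The path factor should satisfy its own recursion matching Corollary~\ref{cor: paths and cycles}, specifically the relation for $\h^*(P_{r+1}; kz)$ coming from the $u_{n+1}(1,z)$ recurrence with $z$ replaced by $kz$.

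\medskip

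\noindent\textbf{The main obstacle} will be correctly bookkeeping the auxiliary polynomials $p_v$ under the cut-vertex gluing so that the telescoping yields exactly the path polynomial $\h^*(P_{r+1}; kz)$ rather than some more complicated expression. The subtraction term $p_u(G;z)p_w(H;z)$ in Theorem~\ref{thm: cut vertex} means the product structure is not immediate; I expect the argument hinges on showing that, at each internal node, the ratio $p_v(\Lambda_{k,i})/p(\Lambda_{k,i})$ or an analogous quantity behaves like the matching-polynomial ratio for a path, so that the $k$ copies collapse to a single spine factor with argument $kz$ while leaving behind precisely $k-1$ untouched copies of each lower subtree polynomial. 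Verifying that this collapse produces $\h^*(P_{r+1}; kz)$ and not merely $\h^*(P_{r+1}; z)$ to some power is the crux, and I would check it carefully against the $r=2$ case before trusting the general telescoping.
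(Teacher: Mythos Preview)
Your plan is viable, but it is considerably more roundabout than the paper's argument. You propose to build $\Lambda_{k,r}$ by gluing the $k$ root-branches one at a time via Theorem~\ref{thm: cut vertex}, tracking both $p$ and $p_v$ through $k-1$ iterations. If you carry this out, writing $a=\h^*(\Lambda_{k,r-1};z)$ and $b=z\,\h^*(\Lambda_{k,r-2};z)^k$, the glued object after $j$ branches satisfies $p=a^j+ja^{j-1}b$ and $p_v=ja^{j-1}b$, so at $j=k$ you land on
\[
\h^*(\Lambda_{k,r};z)=\h^*(\Lambda_{k,r-1};z)^k+kz\,\h^*(\Lambda_{k,r-2};z)^k\,\h^*(\Lambda_{k,r-1};z)^{k-1}.
\]
The paper obtains this same identity in a single line by applying the vertex form of Proposition~\ref{prop: p(G; z) with no even cycles} (equivalently \eqref{eq: main recurrence 2}) directly at the root: deleting the root leaves $k$ disjoint copies of $\Lambda_{k,r-1}$, and deleting the root together with any one child leaves $k-1$ copies of $\Lambda_{k,r-1}$ plus $k$ copies of $\Lambda_{k,r-2}$. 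You actually mention this recurrence in your first paragraph but then set it aside in favor of Theorem~\ref{thm: cut vertex}; using it instead eliminates precisely the $p_v$ bookkeeping you flag as the main obstacle. From that point on the two arguments coincide: factor out $\h^*(\Lambda_{k,r-1};z)^{k-1}$, apply the inductive hypothesis to the remaining factor, and recognize $\h^*(P_r;kz)+kz\,\h^*(P_{r-1};kz)=\h^*(P_{r+1};kz)$ from Corollary~\ref{cor: paths and cycles}. So your induction scheme and your identification of the $z\mapsto kz$ path recurrence are exactly right; the only difference is that the cut-vertex route costs you an extra inductive computation that the vertex-deletion recurrence gives for free.
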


\begin{proof}
    We prove this via induction on $r$.
    Verifying the claim for $r=0,1$ is straightforward and therefore omitted.
    For the induction step, we combine Lemma~\ref{lem: no even cycles} and Proposition~\ref{prop: p(G; z) with no even cycles}, applied to the root of $\Lambda_{k,r}$, to obtain
    \[
        \begin{aligned}
            \h^*(\Lambda_{k,r}; z) &= \h^*(\Lambda_{k,r-1}; z)^k + kz\h^*(\Lambda_{k,r-2}; z)^k\h^*(\Lambda_{k,r-1}; z)^{k-1} \\
            &= \h^*(\Lambda_{k,r-1}; z)^{k-1}\left(\h^*(\Lambda_{k,r-1}; z) + kz\h^*(\Lambda_{k,r-2}; z)^k\right)
        \end{aligned}
    \]
    for $r \geq 2$.
    By applying the inductive assumption and factoring, the right side becomes
    \[
      \resizebox{.9\hsize}{!}{$\displaystyle{\h^*(\Lambda_{k,r-1}; z)^{k-1}\prod_{i=1}^{r-3} \h^*(\Lambda_{k,i}; z)^{k-1}\left(\h^*(P_r;kz)\h^*(\Lambda_{k,r-2}; z)^{k-1} + kz\h^*(P_{r-1};kz)^k\prod_{i=1}^{r-3} \h^*(\Lambda_{k,i}; z)^{(k-1)^2}\right)}.$}
    \]
    Notice that
    \[
        \h^*(P_{r-1};kz)^k\prod_{i=1}^{r-3} \h^*(\Lambda_{k,i}; z)^{(k-1)^2} = \h^*(P_{r-1};kz)\h^*(\Lambda_{k,r-2}; z)^{k-1}.
    \]
    Applying this and simplifying more, we obtain
    \[
        \begin{aligned}
            \h^*(\Lambda_{k,r}; z) &= \prod_{i=1}^{r-1} \h^*(\Lambda_{k,i}; z)^{k-1}\left(\h^*(P_r;kz) + kz\h^*(P_{r-1};kz)\right) \\
            &= \h^*(P_{r+1};kz)\prod_{i=1}^{r-1} \h^*(\Lambda_{k,i}; z)^{k-1},
        \end{aligned}
    \]
    as claimed.
\end{proof}

%%%%%%%%%%%%%%%%%%%%%%%%%%%%%%%%%%%%%%%%%%
%%%%%%%%%%%%%%%%%%%%%%%%%%%%%%%%%%%%%%%%%%

\bibliographystyle{plain}
\bibliography{references}

\end{document}